\newtheorem{thm}{Theorem}[section]
\newtheorem{prop}[thm]{Proposition}
\newtheorem{cor}[thm]{Corollary}
\theoremstyle{definition}
\theoremstyle{remark}
\newtheorem{oss}{\bfseries Remark}[section]
\newtheorem{ex}{\bfseries Example}[section]
\renewcommand{\Re}{\mathop{\mathrm{Re}}}
\renewcommand{\Im}{\mathop{\mathrm{Im}}}
\newcommand{\ralg}{\bar{\mathfrak g}} 
\newcommand{\lalg}{\mathfrak g}  
\title[Coverings for $4$-dimensional almost complex manifolds ...]
{Coverings for $4$-dimensional almost complex manifolds with non-degenerate torsion}
\author{Cristina Bozzetti \and Costantino Medori}
\address{Cristina Bozzetti:
Dipartimento di Matematica ed Applicazioni, 
Universit\`a di Milano Bicocca,
Via Cozzi 55, 20125, Milano, Italy} 
\email{cristina.bozzetti.83@gmail.com}
\address{Costantino Medori:
Dipartimento di Scienze Matematiche, Fisiche e Informatiche
\\ Universit\`a di Parma
\\ Parco Area delle Scienze 7/a (Campus), 43124 Parma,
Italy} 
\email{costantino.medori@unipr.it}
\date{\today}
\keywords{Almost complex manifold, non-degenerate torsion bundle, absolute parallelism,
infinitesimal automorphism}
\subjclass[2010]{32Q60, 53C15, 57S20, 58A17} 
\begin{document}
\maketitle

\begin{abstract}
In \cite{BM} it is shown that almost complex manifolds $(M^4,J)$ of real dimension 4 with non-degenerate torsion bundle admit a double absolute parallelism and it is provided the classification of 
homogeneous $(M^4,J)$ having an associated non-solvable Lie algebra. We extend such a classification to the analysis of the manifolds having an associated solvable Lie algebra, up-to-coverings.
Moreover, for homogeneous $(M^4,J)$ we provide examples with connected and non-connected double covering, thus proving that 
 in general
the double absolute parallelism is not the restriction of two absolute parallelisms.
Furthermore, it is given the definition of a natural metric induced by the absolute parallelisms on $(M^4,J)$ 
and an example of an almost complex manifold with non-degenerate torsion endowed with that metric 
such that it becomes an almost K\"ahler manifold. 
\end{abstract}


\section*{Introduction}
While there exists a local equivalence among complex manifolds, 
almost complex manifolds are not equivalent in general and
the problem of the local equivalence among them is still open. 
However, for a class of almost complex manifolds of dimension 4, having a 
a particular subbundle of their tangent bundle, it is possible to describe all their locally equivalent almost complex structures. 
A partially study about the classification of these kinds of manifolds is given in \cite{BM}. 
In this paper we go on with this study, showing also concrete examples and analyzing the local equivalence among these manifolds.  Homogeneous almost complex manifolds in dimension 6
with semisimple isotropy are classified in \cite{AKW}.

More precisely, in this paper we consider  $4$-dimensional almost complex manifolds $(M^4,J)$ whose Nijenhuis tensor image forms a non-integrable bundle $\mathcal V$, called \emph{torsion bundle}.
For any point $p \in M^4$, there exist two adapted frames of the tangent space $T_pM$, of the form $(X_1, X_2, X_3, X_4)_p$ and $(-X_1, -X_2, X_3, X_4)_p$, which 
provide a $\mathbb Z_2$-structure on $(M^4,J)$. 
Assuming that $(M^4,J)$ is locally homogeneous, we consider the Lie algebra $\lalg$ 
associated to $(M^4,J)$ through the double absolute parallelism.
When $\lalg$ is non-solvable a complete classification (of germs) of $(M^4,J)$ is given in \cite{BM}, while here we analyze also 
the solvable case. 
To this aim, we focus on almost complex manifolds having $\mathbf {A_{4.1}}$ (the unique nilpotent algebra of dimension $4$) and $\mathbf {A_{3.2} \oplus A_1}$ as associated Lie algebras, analyzing and providing examples for all the cases that occur. 

Moreover, we provide  connected homogeneous manifolds with a connected double covering
 (but also with a non-connected one) and we study examples of such manifolds having $\mathbf {A_{4.1}}$ as associated Lie algebra.
 In particular, we obtain that in general the double absolute parallelism is not the restriction of two absolute parallelisms.
 
Finally, we define a metric on $(M^4,J)$, induced by its adapted frames, so that $(M^4,J)$ becomes an almost 
K\"ahler manifold and we also give an example of such a manifold. 
We prove that when $(M^4,J)$ is homogeneous, it is not possible to give on
it a structure of almost K\"ahler manifold.
\smallskip\par
The paper is structured in the following way. 
In section \ref{section1} we recall the basic notions and  properties about almost complex manifolds, and we focus on 
that having  dimension 4 with non-degenerate torsion bundle. 
Here, we highlight the role of the image of the Nijenhuis tensor in the construction of the torsion bundle $\mathcal V$, dwelling on the properties of the spaces of the filtration 
$\mathcal V^+_p \subseteq \mathcal V_p \subseteq \mathcal V_{-2|p} \subseteq \mathcal V_{-3|p} \subseteq T_pM$ and on the existence of two adapted frames on the almost complex manifold $(M^4,J)$ which give rise to a double absolute parallelism on it.
We also recall the definition of a Lie algebra $\lalg$ associated to a connected, locally homogeneous 
almost complex manifold $(M^4,J)$ with non-degenerate torsion and we explain that its 
derived algebra $\lalg'$ must have dimension $2$ or $3$.

In section \ref{section2} we extend the study that we did in \cite{BM}, related to a non-solvable Lie algebra associated to a locally homogeneous almost complex manifold $(M^4,J)$ with non-degenerate torsion bundle, to the study of the solvable case. Here, the classification of the non-equivalent $J$'s is restricted to the study of significant examples because of the big number of solvable Lie algebras. In particular, we analyzed the $4$-dimensional Lie algebras $\mathbf {A_{4.1}}$ and $\mathbf {A_{3.2} \oplus A_1}$ (for the notation and a classification of the solvable Lie algebras, see for example \cite{PBNL}). The former splits into the cases 
where
$\mathcal V_p$ is fundamental or non-fundamental, 
the latter splits into the cases 
where
the space $\mathcal V_p \cap \lalg'$ is trivial or not. 
We provide concrete examples of these kind of manifolds and 
we show that there exist manifolds with non-degenerate torsion 
of the form $M^4 = M^3 \times \mathbb R$ 
for which $M^3$ is locally equivalent to a $3$-dimensional hypersphere, but also for which $M^3$ is not.

In section \ref{section3} we assume that $(M^4,J)$ is a $4$-dimensional connected, homogeneous almost complex manifold with non degenerate torsion bundle. 
When the double covering $F$ of $(M^4,J)$ is connected,
it 
is isomorphic to the connected component of the group of the automorphisms $Aut(M,J)$ of $(M^4,J)$;
whereas, when $F$ is non-connected, the connected component of $Aut(M,J)$ is isomorphic to $M$ (see Proposition \ref{connessioneF}).
Furthermore, we provide examples of homogeneous manifolds $(M^4,J)$ with connected  $F$
(but also with a non-connected one), showing that 
in general it is not possible to have two independent absolute parallelisms globally, but only locally.
Moreover, in Proposition \ref{MconA4.1} we give two representatives of the classes of equivalence of 
homogeneous manifolds $(M^4,J)$ having $\mathbf {A_{4.1}}$ as associated Lie algebra.

Section \ref{section4} is devoted to the construction of metrics, invariant and not, 
which are compatible with the almost complex structure $J$ of a manifold of dimension $4$ with non-degenerate torsion bundle. We give the definition of a natural metric induced by the absolute parallelisms on $(M^4,J)$ 
and we provide an example of a manifold endowed with such a metric
so that it becomes an almost K\"ahler manifold. 
We show that when the manifold is homogeneous there is not an invariant metric on it.
This section provides just some preliminary results that could be a starting point for further research.

\section{Preliminaries}\label{section1}

An \emph{almost complex manifold} $(M^{2n},J)$ is a $2n$-differentiable manifold $M$ endowed with an \emph{almost complex structure} $J$, that is, an endomorphism $J:TM \rightarrow TM$ of the tangent bundle $TM$ such that $J^2=-id$.

An almost complex structure $J$ is a \emph{complex structure} if and only if it is \emph{integrable} 
that is the \emph{Nijenhuis tensor} $N_J$,
which is the skew-symmetric (1,2)-tensor defined by
$$N_J(X,Y):=[JX,JY] -[X,Y] - J([JX,Y]+[X,JY]),$$
vanishes $\forall X,Y \in \Gamma(TM)$.
Note that
$$N_J(X,JY)=-JN_J(X,Y)=N_J(JX,Y).$$

If $(M,J)$ and $(M',J')$ are two almost complex manifolds,  
a differential mapping $F:M \rightarrow M'$ \ is called \emph{(J,J')-holomorphic}, or briefly \emph{holomorphic}, if its differential satisfies $dF \circ J = J' \circ dF.$
We say that $F$ is \emph{(J,J')-biholomorphic} when $F$ is also a diffeomorphism. 

We recall that two almost complex manifolds $(M,J)$ and $(M',J')$ are \emph{locally equivalent} at points $p \in M$ and $p'\in M'$, if there exist a neighborhood $U$ of $p$, a neighborhood $U'$ of $p'$ and a $(J,J')$-biholomorphic map $\varphi: U \rightarrow U'$,
such that $\varphi(p)=p'$ and $\varphi_{*}(J)=J'$.

For an almost complex manifold $(M^{2n},J)$, 
an \emph{infinitesimal automorphism} is a tangent vector field $V \in \Gamma(TM)$ that satisfies 
$$
[V,JX]=J[V,X], \qquad \forall X \in \Gamma(TM).
$$
The \emph{symmetry algebra} $aut_p(M^{2n},J)$, for any $p \in M^{2n}$, is the set of germs of infinitesimal automorphisms at $p$. 
We denote with $aut^0_p(M^{2n},J)$ the \emph{isotropy algebra} at $p$, that is 
the set of infinitesimal automorphisms 
vanishing in $p$.

For any $p \in M^{2n}$, we denote  
$$ \mathcal V_p=\{X \in T_pM : X=N_J(A,B) \mbox{ for some } A, B \in T_pM\}$$
as the set of the images of the Nijenhuis tensor in $p$. 
From the properties of $N_J$, we obtain that $\mathcal V_p$ is $J$-invariant and that it is an even dimensional space. 
When the rank of $\mathcal V_p$ is constant $\forall p \in M^{2n}$, 
we have that
$$\mathcal{V}:= \bigcup_{p \in M} \mathcal{V}_p$$
forms a bundle that we call \emph{torsion bundle}.
We note that for a complex manifold $(M^{2n},J)$ we have $\mathcal V_p = \{0\}$
and $\mathcal V$ is the trivial bundle.
\medskip\par
If we consider the almost complex manifolds $(M^{4},J)$ of 
\emph{real dimension $4$}, 
the torsion bundle becomes very relevant to their analysis and their classification. In \cite{BM} we provide a classification (up to covering) of homogeneous $4$-dimensional almost complex manifolds with non-degenerate torsion bundle with a non-solvable Lie algebra of infinitesimal automorphisms. 
In this paper, we go on with the study of these manifolds 
and we proceed with their analysis
when their associated Lie algebra is solvable.
We recall here some definitions and results obtained in \cite{BM} useful to this aim. 
\medskip\par
Let $\mathcal V$ be the torsion bundle, we say that $\mathcal V$ is \emph{non-degenerate at $p$} $\in M^{4}$ 
if we have
$$[X,JX]_p \notin \mathcal V_p,$$
for some $X \in \Gamma(\mathcal V)$.
The torsion bundle $\mathcal V$ is called \emph{non-degenerate} if it is non-degenerate at any $p \in M^{4}$.
When $\mathcal V$ is non-degenerate at a point $p$, 
then $\mathcal V$ is non-degenerate in a neighborhood of $p$, because of the smoothness of $N_J$.

Let $(M^4,J)$ be an almost complex manifold of real dimension 4, with $N_J \neq 0$ at $p$ and such that 
$\mathcal V$ is non-degenerate at $p$. Then we set
$$
\mathcal V_{-1|p}:= \mathcal V_{p}, \qquad
\mathcal V_{-2|p}:= \mathcal V_{p} + [\Gamma (\mathcal V),\Gamma (\mathcal V)]_p \neq \mathcal V_{p}, 
$$
$$
\mathcal V_{-3|p}:= \mathcal V_{-2|p} +[\Gamma (\mathcal V),[\Gamma (\mathcal V),\Gamma (\mathcal V)]]_{p}.
$$
For any section $A$ of $\mathcal V_{-1}$ such that $A_p \neq 0$, the linear application 
 $$\tau_{T_p^A} :
\begin{cases}
\mathcal V_p &  \rightarrow \mathcal V_p  \\
X_p & \mapsto N(X_p,T_p^A ),
\end{cases}
$$
where $T_p^A:=[A,JA]_p$ is a fixed vector,
has exactly two eigenspaces, that we denote with $\mathcal V_p^{+}$ and $\mathcal V_p^{-}$, which do not depend on the choice of the field $A$ and such that $J\mathcal V_p^{+}  = \mathcal V_p^{-}$.
We also denote with  
$$\mathcal V^{\pm}=\bigcup_{p \in M}\mathcal V_p^{\pm}$$ 
the fiber bundles obtained as the union of these eigenspaces, for $p \in M$.
We obtain the following filtration of the tangent space $T_pM$:  
$$\mathcal V_{p}^+ \subsetneqq \mathcal V_p =\mathcal V_{-1|p} \subsetneqq \mathcal V_{-2|p} \subseteq \mathcal V_{-3|p}  \subseteq T_pM,$$
and we say that $\mathcal V_{p}$ is \emph{fundamental} when $\mathcal V_{-3|p} = T_pM$ and \emph{non-fundamental} when $\mathcal V_{-2|p} = \mathcal V_{-3|p}$.

We recall some properties of these spaces. 
If $X \in \Gamma(\mathcal V)$ is any vector field such that $X_{p} \neq 0$, we have:
\begin{enumerate}
\item  $(X_{p},JX_{p})$ gives a base of $\mathcal V_p$;
\item there exists a neighborhood $U$ of $p$ such that $\forall q \in U$ the dimension of $\mathcal V_q$ is constant and is 2;
\item  $(M^4, \mathcal V, J_{|\mathcal V})$ is a CR manifold and in particular $N_J(A,B)=0$, $\forall A,B \in \mathcal V_p$;
\item $(X_p,JX_p,T^X_p)$ is a base of $\mathcal V_{-2|p}$, where $T^X_p:=[X,JX]_p$;
\item $(X_p,JX_p,T^X_p,JT^X_p)$ is a base of $T_pM$.
\end{enumerate}

Here we report some  results obtained in \cite{BM} that are significant in the next analysis.  

\begin{prop}	\label{distinguished-field}
A \emph{distinguished section} $X$ is locally determined
in $\mathcal V^+$;
such a section is unique up to its sign and it is such that 
$\tau_{X_q}$ has eigenvalue $1$, for all $q$ in an open neighborhood $U$ of $p \in M$. 
\end{prop}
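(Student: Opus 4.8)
The plan is to produce the distinguished section by normalizing an arbitrary local section of $\mathcal V^+$, exploiting how the operator $\tau$ rescales when its generating field is multiplied by a function. Throughout, $\tau_{X_q}$ is read as $\tau_{T^X_q}$, and the eigenvalue in question is the one on the line $\mathcal V^+_q\ni X_q$, so that the assertion ``$\tau_{X_q}$ has eigenvalue $1$'' means $N_J(X_q,T^X_q)=X_q$. Note first that $\mathcal V^+$ is a real line subbundle of $\mathcal V$: since $N_J(JZ,\cdot)=-J\,N_J(Z,\cdot)$, the map $\tau_{T^A_q}$ anticommutes with $J$, hence has two opposite eigenvalues and two one–dimensional eigenspaces $\mathcal V^\pm_q$. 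I would therefore fix a neighborhood $U$ of $p$ carrying a nowhere–vanishing smooth section $X_0$ of $\mathcal V^+$, and look for a nowhere–zero function $f$ on $U$ such that $X:=fX_0$ satisfies $N_J(X_q,T^X_q)=X_q$ for every $q\in U$.

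The crucial step is the homogeneity of $\tau$ under $X_0\mapsto fX_0$. By the Leibniz rule for the bracket,
\[
T^{fX_0}=[fX_0,fJX_0]=f^2\,T^{X_0}+f\,(X_0 f)\,JX_0-f\,\big((JX_0)f\big)\,X_0 .
\]
Evaluating at $q$ and collecting the last two terms into $W_q$, I get $T^{fX_0}_q=f^2\,T^{X_0}_q+W_q$ with $W_q\in\mathcal V_q$, since $X_0$ and $JX_0$ span $\mathcal V$. Then for any $Z\in\mathcal V_q$ one has $N_J(Z,W_q)=0$ by the CR property (item (3): $N_J$ vanishes on $\mathcal V\times\mathcal V$), so
\[
\tau_{T^{fX_0}_q}(Z)=N_J(Z,T^{fX_0}_q)=f^2\,N_J(Z,T^{X_0}_q)=f^2\,\tau_{T^{X_0}_q}(Z).
\]
Thus $\tau_{T^{fX_0}_q}=f^2\,\tau_{T^{X_0}_q}$ on $\mathcal V_q$, and in particular the eigenvalue on $\mathcal V^+_q$ gets multiplied by $f^2$.

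To finish, let $\lambda(q)$ be the eigenvalue of $\tau_{T^{X_0}_q}$ on $\mathcal V^+_q$. It is smooth and nowhere zero, because non-degeneracy of $\mathcal V$ forces $T^{X_0}_q\notin\mathcal V_q$ and hence $\tau_{T^{X_0}_q}\neq 0$; and it is positive, since $\mathcal V^+$ is precisely the eigenspace carrying the positive one of the two opposite eigenvalues (a labelling that is scaling–independent, as the eigenvalue only rescales by $f^2>0$). Setting $f:=\lambda^{-1/2}$, a smooth nowhere–zero function on $U$, and $X:=fX_0$, the previous step gives eigenvalue $f^2\lambda=1$ on $\mathcal V^+_q$, i.e. $N_J(X_q,T^X_q)=X_q$; this yields the asserted local section. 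For uniqueness, any normalized section must be of the form $fX_0$ with $f^2\lambda=1$, hence $f=\pm\lambda^{-1/2}$, giving exactly the two sections $\pm X$; and replacing $X_0$ by another generator $gX_0$ of $\mathcal V^+$ multiplies $\lambda$ by $g^2$, so the normalized section changes only by the sign of $g$, proving independence of the initial choice. The main obstacle is exactly the scaling computation of the second paragraph: it is the vanishing of $N_J$ on $\mathcal V\times\mathcal V$ that discards the first–order Leibniz terms (which lie in $\mathcal V$) and leaves the clean homogeneity $\tau_{T^{fX_0}}=f^2\tau_{T^{X_0}}$, while non-degeneracy is what keeps $\lambda$ positive and thus makes $\lambda^{-1/2}$ a genuine smooth function on $U$.
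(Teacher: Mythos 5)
Your proof is correct, but note that there is no in-paper argument to compare it against: Proposition \ref{distinguished-field} is stated as a result recalled from \cite{BM}, with no proof given in this paper. Your argument is the natural one and surely the intended one: the homogeneity $\tau_{T^{fX_0}_q}=f^2\,\tau_{T^{X_0}_q}$ (where tensoriality of $N_J$ together with its vanishing on $\mathcal V\times\mathcal V$ discards the Leibniz terms), the resulting scaling-invariance of the sign of the eigenvalue, which is exactly what makes the labelling of $\mathcal V^+$ as the positive eigenline well posed, and then the normalization $f=\lambda^{-1/2}$ with uniqueness up to sign. Two steps deserve one more line each, though both are covered by the background the paper recalls just before the statement: (i) the existence of a smooth nowhere-vanishing local section $X_0$ of $\mathcal V^+$ is justified either because the paper asserts that $\mathcal V^\pm$ are bundles, or directly because a smooth field of endomorphisms with simple eigenvalues has smooth eigenlines; and (ii) your inference ``$T^{X_0}_q\notin\mathcal V_q$ hence $\tau_{T^{X_0}_q}\neq 0$'' is not immediate as written --- the clean way is to observe that if $\tau_{T^{X_0}_q}$ vanished on $\mathcal V_q$, then skew-symmetry, the identity $N_J(Z,JT)=-JN_J(Z,T)$ and $N_J|_{\mathcal V_q\times\mathcal V_q}=0$ would force $N_J$ to vanish on the full basis $(Z,JZ,T^{X_0}_q,JT^{X_0}_q)$ of $T_qM$, contradicting $\dim\mathcal V_q=2$; alternatively, the recalled fact that $\tau_{T^{A}_q}$ has \emph{exactly two} eigenspaces already forces nonzero opposite eigenvalues. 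Finally, uniqueness up to a single overall sign (rather than a pointwise sign) needs $U$ connected, which you should state but may of course assume.
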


We will call \emph{distinguished field} of $\mathcal V^+$ such a section $X$.
We note that if $X$ is the distinguished section of $\mathcal V^+$, then $JX$ is the distinguished section of 
$\mathcal V^-$. 
For any point $p \in M$, the isomorphisms
$$f_p :
\begin{cases}
(\mathbb R^4,J) &  \rightarrow (T_pM,J) \\
(e_1,e_2,e_3,e_4) & \mapsto (X_p, JX_p, T_p^X, JT_p^X) \\ 
\end{cases}
$$
are called \emph{adapted frames} at a point $p$, 
where $(e_1,e_2,e_3,e_4)$ is the canonical base of $\mathbb R^4$ and $(X_p, JX_p, T_p^X, JT^X_p)$ is a base of $T_pM$, with $X$ 
the distinguished section of $\mathcal V^+$.

\begin{thm}\label{adaptedframes}
If $(M^4,J)$ is an almost complex manifold with dimension $4$ and with non-degenerate torsion bundle $\mathcal V$, 
then, for each point  $p \in M$, there exists an unique pair of adapted frames
$$
\begin{array}{rll}
f'_p (e_1,e_2,e_3,e_4)  & = & (X_p, JX_p, T_p^X, JT^X_p),\\
f''_p (e_1,e_2,e_3,e_4) & = & (-X_p, -JX_p, T_p^X, JT^X_p),
\end{array}
$$
where $X$ is one of the two distinguished sections of $\mathcal V^+$ in a neighborhood of $p$. 
\end{thm}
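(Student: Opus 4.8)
The plan is to reduce everything to the uniqueness-up-to-sign of the distinguished section provided by Proposition \ref{distinguished-field}, and then to track how the two sign choices propagate through the construction of the adapted frame. The statement is essentially an unpacking of that proposition together with property (5) of the filtration spaces, so no new geometric input is needed: it is a bookkeeping argument on signs.

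First I would invoke Proposition \ref{distinguished-field} to fix, on an open neighborhood $U$ of $p$, a distinguished section $X$ of $\mathcal V^+$, recalling that it is determined up to its sign; hence the only two distinguished sections of $\mathcal V^+$ near $p$ are $X$ and $-X$. For each of these I form the corresponding adapted frame according to the definition preceding the statement, using property (5), namely that $(X_p, JX_p, T_p^X, JT_p^X)$ is a basis of $T_pM$, to guarantee that the resulting map $f_p\colon(\mathbb R^4,J)\to(T_pM,J)$ is genuinely a frame, i.e.\ a linear isomorphism compatible with $J$. The choice $X$ yields $f'_p$ exactly as stated.

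The key computation is to evaluate the frame associated with $-X$. The second component is $J(-X)_p=-JX_p$ by linearity of $J$, while the third component is
$$
T_p^{-X}=[-X,J(-X)]_p=[-X,-JX]_p=[X,JX]_p=T_p^X
$$
by bilinearity of the Lie bracket; consequently $JT_p^{-X}=JT_p^X$. Therefore the adapted frame associated with $-X$ is $(-X_p,-JX_p,T_p^X,JT_p^X)=f''_p$, as claimed. This step is the crux: the point is that the quadratic expression $T^X$ is invariant under $X\mapsto -X$, whereas the first two frame vectors change sign, and this is exactly what forces the two admissible frames to differ precisely by negation of the first two vectors while the last two are fixed.

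Finally, for uniqueness I would argue that since an adapted frame at $p$ is by definition built from a distinguished section of $\mathcal V^+$, and since Proposition \ref{distinguished-field} provides exactly two such sections, $X$ and $-X$, the frames $f'_p$ and $f''_p$ constructed above are the only two. The one place requiring a little care is that $T_p^X$ depends a priori on the germ of the vector field $X$ and not merely on the vector $X_p$; but this germ is itself pinned down (up to sign) by Proposition \ref{distinguished-field}, so $T_p^X$ is well defined and the two sign choices exhaust all possibilities, closing the argument.
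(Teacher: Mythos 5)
Your proof is correct. Note that the paper itself states Theorem \ref{adaptedframes} without proof (it is recalled from \cite{BM}, along with Proposition \ref{distinguished-field}), so there is no in-paper argument to compare against; your reduction to the uniqueness-up-to-sign of the distinguished section, combined with the observation that $T_p^{-X}=[-X,-JX]_p=[X,JX]_p=T_p^X$ by bilinearity of the bracket, is exactly the natural bookkeeping that the cited result rests on, and the appeal to property (5) correctly guarantees that both maps are genuine $J$-compatible frames.
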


\begin{prop}\label{Z_2}
The set of all adapted frames $f_p$, with $p$ in $M$, forms a reduction $F$ of the principal bundle of 
linear frames $L(M)$ on $M$ which has structure group isomorphic to $\mathbb Z_2$.
\end{prop}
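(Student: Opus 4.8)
The plan is to exhibit $F$ as a principal subbundle of $L(M)$ whose structure group is the order-two subgroup of $GL(4,\mathbb R)$ linking the two adapted frames at each point. First I would record the group element relating the pair of frames of Theorem \ref{adaptedframes}. Writing frames as isomorphisms $f_p:\mathbb R^4\to T_pM$ and letting $GL(4,\mathbb R)$ act on the right by $f_p\cdot A=f_p\circ A$, one has $f''_p=f'_p\circ g$ with $g=\mathrm{diag}(-1,-1,1,1)$, since passing from $(X_p,JX_p,T^X_p,JT^X_p)$ to $(-X_p,-JX_p,T^X_p,JT^X_p)$ changes only the signs of the first two basis vectors. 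As $g^2=\mathrm{Id}$ and $g\neq\mathrm{Id}$, the subgroup $H:=\{\mathrm{Id},g\}$ is isomorphic to $\mathbb Z_2$.

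Next I would verify the two algebraic features of a reduction. By Theorem \ref{adaptedframes} the fiber $F_p:=F\cap\pi^{-1}(p)$, with $\pi:L(M)\to M$, is exactly $\{f'_p,f''_p\}$, so it consists of two points; and since $f'_p\cdot g=f''_p$ and $f''_p\cdot g=f'_p$, the group $H$ preserves $F$ and acts simply transitively on each fiber. This is precisely the content of $F$ being a principal $H$-bundle at the level of the fibers.

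The analytic heart is local triviality, and here I would use the distinguished field. By Proposition \ref{distinguished-field}, on a neighborhood $U$ of any point there is a smooth distinguished section $X$ of $\mathcal V^+$, unique up to sign. Because $J$ is smooth and the Lie bracket of smooth vector fields is again smooth, the map $s:U\to L(M)$ given by $s(q)=(X_q,JX_q,T^X_q,JT^X_q)$, with $T^X_q=[X,JX]_q$, is a smooth section taking values in $F$; thus $(q,h)\mapsto s(q)\cdot h$ identifies $F|_U$ with $U\times H$, so $F$ is a smooth (double-sheeted) submanifold of $L(M)$. Finally, on an overlap $U\cap U'$ the two distinguished fields satisfy $X'=\pm X$, so the associated local sections differ by $\mathrm{Id}$ or $g$; the transition functions therefore take values in $H$, confirming that the structure group is exactly $\mathbb Z_2$ and not a larger subgroup of $GL(4,\mathbb R)$.

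The main obstacle I anticipate is the smoothness and consistency bookkeeping rather than any deep idea: one must check that the locally defined frames assemble into a genuine subbundle and that the sole indeterminacy—the global sign of the distinguished field—is captured precisely by the $\mathbb Z_2$-action. This sign ambiguity of $X$ across charts is exactly what can render $F$ a nontrivial double cover, in agreement with the paper's later discussion of connected versus non-connected double coverings.
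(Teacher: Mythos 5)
Your proof is correct; note that the paper states this proposition without proof, recalling it from \cite{BM}, so there is no in-text argument to compare against, but yours is exactly the natural one built from the ingredients the paper supplies. Specifically, you use Theorem \ref{adaptedframes} to get the two-point fibers with the simply transitive action of $\{\mathrm{Id},\,\mathrm{diag}(-1,-1,1,1)\}\cong\mathbb Z_2$, and Proposition \ref{distinguished-field} to get smooth local sections $q\mapsto(X_q,JX_q,T^X_q,JT^X_q)$ whose sign ambiguity yields locally constant transition functions in this group, which is precisely the reduction-of-structure-group statement (and correctly anticipates the connected/non-connected dichotomy for $F$ in Section \ref{section3}).
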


The two adapted frames $f'_p$ and  $f''_p$ of Theorem \ref{adaptedframes}
give a $\mathbb Z_2$-structure on $(M^4,J)$, while 
the isomorphism
$(\pi_{*|f})^{-1} \circ f : \mathbb R^4 \rightarrow T_{f}F$
is an adapted frame for $F$ that
gives an $\{e\}$-structure on $F$, where 
$$\pi: F \rightarrow M$$ 
is the canonical projection of $F$ on $M$
and $f$ is any point $f_p (e_1,e_2,e_3,e_4)$ in $F$.
Therefore, there is an absolute parallelism on $F$. 

We provide $F$ with the almost complex structure given by the lift of $J$ to $F$,
which we denote  
with the same letter.

If we denote with $Aut(M,J)$ the group of the automorphisms of an almost complex manifold $(M,J)$, we have
\begin{thm}\label{autoM}
The group of automorphisms $Aut(M,J)$ of $(M^4,J)$ is a Lie group and it has dimension
$\dim Aut(M,J) \leq 4$
and so the symmetry algebra of $(M^4,J)$ at $p$ also has dimension
$\dim aut_p(M,J) \leq 4.$
In particular, the group of automorphisms $Aut^0(M,J)$ of $(M^4,J)$ that fix a point $p$ has at most two elements and 
$ aut^0_p(M,J)=\{0\}.$
\end{thm}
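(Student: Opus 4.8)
The key structural input is the $\mathbb Z_2$-structure $F$ from Proposition \ref{Z_2} together with the absolute parallelism on $F$ coming from the induced $\{e\}$-structure. The plan is to transport every automorphism of $(M^4,J)$ to an automorphism of this parallelism and then invoke the standard rigidity of manifolds with absolute parallelism, which forces the relevant transformation groups to be Lie groups of dimension at most $\dim F = 4$.

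First I would show that any $\varphi \in Aut(M,J)$ lifts to a unique diffeomorphism $\tilde\varphi$ of $F$ that preserves the parallelism. The point is that the data defining $F$ — the torsion bundle $\mathcal V$, its non-degeneracy, the eigenspace splitting $\mathcal V^{\pm}$, and hence the distinguished section $X$ of $\mathcal V^+$ up to sign (Proposition \ref{distinguished-field}) — are all built canonically out of $J$ and the bracket, so they are preserved by the $(J,J)$-biholomorphism $\varphi$. Consequently $\varphi$ carries adapted frames to adapted frames, giving a lift $\tilde\varphi:F\to F$ commuting with $\pi$ and preserving the canonical $\mathbb R^4$-valued parallelism form. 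Then I would apply the classical theorem (Kobayashi) that the automorphism group of a connected manifold equipped with an absolute parallelism is a Lie group of dimension at most the dimension of the manifold, and that such an automorphism is determined by its value at a single point. This yields $\dim Aut(M,J)\le \dim F=4$, and the statement $\dim aut_p(M,J)\le 4$ follows since the symmetry algebra is the Lie algebra of $Aut(M,J)$ (infinitesimal automorphisms integrate to local one-parameter groups of automorphisms by the defining condition $[V,JX]=J[V,X]$).

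For the isotropy statements I would argue pointwise on $F$. Fix $p$ and an automorphism $\varphi$ with $\varphi(p)=p$; its lift $\tilde\varphi$ permutes the fiber $\pi^{-1}(p)$, which by Theorem \ref{adaptedframes} consists of exactly the two adapted frames $f'_p$ and $f''_p$. Since an automorphism preserving an absolute parallelism is rigid — fixing one point of $F$ forces it to be the identity on all of $F$ — the map $\varphi\mapsto \tilde\varphi|_{\pi^{-1}(p)}$ embeds $Aut^0(M,J)$ into the symmetric group on the two-element set $\{f'_p,f''_p\}$, so $Aut^0(M,J)$ has at most two elements. Passing to the infinitesimal level: a vector field $V\in aut^0_p(M,J)$ vanishing at $p$ has a flow fixing $p$, hence lifts to a flow on $F$ fixing every point of $\pi^{-1}(p)$; by the same rigidity this flow is trivial, so $V=0$, giving $aut^0_p(M,J)=\{0\}$.

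The main obstacle I anticipate is the careful verification that $\varphi$ genuinely preserves the \emph{full} adapted-frame data, in particular the sign-normalized distinguished field $X$: one must check that the construction of $\mathcal V^{\pm}$ via the eigenspaces of $\tau_{T_p^A}$ and the eigenvalue-$1$ normalization of Proposition \ref{distinguished-field} are natural under $(J,J)$-biholomorphisms, so that $\varphi$ either fixes or interchanges $f'_p$ and $f''_p$ but never maps into a frame outside $F$. Once this naturality is in hand, the dimension bound and both isotropy assertions reduce to the standard uniqueness and rigidity properties of automorphisms of an $\{e\}$-structure, and the remaining steps are routine.
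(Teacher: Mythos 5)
Your proposal is correct and follows exactly the route the paper intends: Theorem \ref{autoM} is stated here as a result recalled from \cite{BM} without a proof, but the machinery the paper sets up around it (the adapted frames of Theorem \ref{adaptedframes}, the $\mathbb{Z}_2$-reduction $F$ of Proposition \ref{Z_2}, and the absolute parallelism on $F$) is precisely what your argument uses, namely the naturality of the adapted-frame construction under $(J,J)$-biholomorphisms, the lift of automorphisms to parallelism-preserving maps of $F$, and Kobayashi's rigidity theorem for $\{e\}$-structures. The only points to polish are routine: when $F$ is non-connected the rigidity statement must be applied component-wise (each component inherits an absolute parallelism, and deck-equivariance of the lift then gives the identity on all of $F$), and the bound $\dim aut_p(M,J)\leq 4$ is best obtained by evaluating lifted germs of infinitesimal automorphisms at a point of the fiber $\pi^{-1}(p)$ rather than by identifying the algebra of germs with the Lie algebra of the global group $Aut(M,J)$.
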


Now we are going to recall the construction of the Lie algebra associated to a locally homogeneous almost complex manifold of dimension 4 with a non-degenerate torsion bundle $\mathcal V$.

If $(X_1, X_2, X_3, X_4)$ is an adapted frame of $(M^4,J)$ and $c_{ij}^k$ are the local constant structure defined by $[X_i,X_j]= c_{ij}^k X_k,$ with $i,j=1, ..., 4$, 
we define $\lalg$ as the Lie algebra generated by $\{ X_1, X_2, X_3, X_4\}$.
We will denote with $\lalg^{(k)}$ its derived algebras and with $\lalg^k$ its descending central series; 
since $\lalg^{(1)}=\lalg^{1}$, for simplicity we will denote both with $\lalg '$. 
We also denote by $\mathfrak {z}(\lalg)$ the \emph{center} of $\lalg$
and we note that $\mathfrak {z}(\lalg) \cap \mathcal V_p = \{0\}.$
 
\begin{prop}\label{dimg'}
If $(M^4,J)$ is a locally homogeneous almost complex manifold with non-degenerate torsion bundle, 
then the derived algebra $\lalg'$ of the Lie algebra $\lalg$ associated to $(M^4,J)$ has dimension
$ 2 \leq \dim \lalg ' \leq 3. $
In particular, when $\lalg$ is non-solvable we have that $\dim \lalg '=3$ and that it is the reductive algebra
$\lalg = \lalg ' \oplus \mathfrak z(\lalg).$ 
\end{prop}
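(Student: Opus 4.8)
The plan is to split the statement into the two dimension bounds and then read off the reductive structure in the non-solvable case. Throughout I would use two facts from the preliminaries that do all the work: the adapted frame satisfies $[X_1,X_2]=T^X=X_3$, so $X_3\in\lalg'$ and $X_3\neq 0$; and $\mathcal V_p=\mathrm{span}(X_1,X_2)$ is \emph{exactly} the image of $N_J$ at $p$, so every value $N_J(A,B)$ lies in $\mathrm{span}(X_1,X_2)$. Since local homogeneity makes the structure constants $c_{ij}^k$ constant, $\lalg'$ is simply the subspace spanned by all brackets $[X_i,X_j]$, and I would also keep at hand the identities $N_J(JA,B)=N_J(A,JB)=-JN_J(A,B)$ together with $JX_1=X_2$, $JX_3=X_4$.

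For the lower bound $\dim\lalg'\geq 2$ I would argue by contradiction, assuming $\dim\lalg'=1$. Then $\lalg'=\mathbb R X_3$, so every bracket $[X_i,X_j]$ is a scalar multiple of $X_3$. I would compute the Nijenhuis tensor on the single pair $(X_1,X_3)$ via $N_J(X_1,X_3)=[X_2,X_4]-[X_1,X_3]-J[X_2,X_3]-J[X_1,X_4]$: since each bracket on the right is a multiple of $X_3$ and $JX_3=X_4$, the result lies in $\mathrm{span}(X_3,X_4)$. But $N_J(X_1,X_3)$ must also lie in $\mathrm{Im}\,N_J=\mathcal V_p=\mathrm{span}(X_1,X_2)$, and $\mathcal V_p\cap\mathrm{span}(X_3,X_4)=\{0\}$, forcing $N_J(X_1,X_3)=0$. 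I would then propagate this through the symmetry relations: $N_J(X_1,X_4)=-JN_J(X_1,X_3)=0$, $N_J(X_2,X_3)=N_J(X_1,X_4)=0$, and $N_J(X_2,X_4)=-N_J(X_1,X_3)=0$, while the two remaining frame pairs vanish automatically because $N_J(Y,JY)=0$ kills $(X_1,X_2)$ and $(X_3,X_4)$. Hence $N_J$ vanishes on every pair of frame vectors, so $N_J\equiv 0$ at $p$, contradicting $N_J\neq 0$ (which is presupposed by non-degeneracy). Therefore $\dim\lalg'\geq 2$.

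For the upper bound together with the reductive conclusion I would split on solvability. If $\lalg$ is solvable then $\lalg'\subsetneq\lalg$, since otherwise the derived series would stabilize at $\lalg\neq 0$; hence $\dim\lalg'\leq 3$. If $\lalg$ is non-solvable I would invoke the Levi decomposition $\lalg=\mathfrak r\rtimes\mathfrak s$ with $\mathfrak s\neq 0$ semisimple; as real semisimple algebras have dimension $\geq 3$ and $\dim\lalg=4$, necessarily $\dim\mathfrak s=3$ (so $\mathfrak s\cong\mathfrak{sl}_2(\mathbb R)$ or $\mathfrak{su}(2)$) and $\dim\mathfrak r=1$. Because $\mathfrak s$ is perfect, its representation on the one-dimensional radical is trivial, so $[\mathfrak s,\mathfrak r]=0$; with $[\mathfrak r,\mathfrak r]=0$ this shows $\mathfrak r$ is central and $\lalg=\mathfrak r\oplus\mathfrak s$ is reductive. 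Then $\lalg'=[\mathfrak s,\mathfrak s]=\mathfrak s$ has dimension $3$ and $\mathfrak z(\lalg)=\mathfrak r$, giving $\dim\lalg'=3$ and $\lalg=\lalg'\oplus\mathfrak{z}(\lalg)$.

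I expect the lower bound, i.e. excluding $\dim\lalg'=1$, to be the crux. The decisive idea is to play the defining constraint $\mathrm{Im}\,N_J=\mathcal V_p$ against the assumption $\lalg'=\mathbb R X_3$: the former pins every Nijenhuis value into $\mathrm{span}(X_1,X_2)$, while the latter pushes the same value into the transverse plane $\mathrm{span}(X_3,X_4)$, and the only vector in both is zero, collapsing $N_J$ entirely. The rest of that step is bookkeeping, namely checking that the symmetry relations of $N_J$ reduce all six frame pairs to the single computation on $(X_1,X_3)$; the upper bound and reductivity are then routine structure theory of four-dimensional real Lie algebras.
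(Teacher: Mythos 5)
Your proof is correct; the main thing to flag is that this paper contains no proof of Proposition \ref{dimg'} to compare against --- it is one of the results recalled from \cite{BM} in the preliminaries --- so your argument stands or falls on its own, and it stands. The lower bound is the real content, and your mechanism is the right one: by definition every value of $N_J$ at $p$ lies in $\mathcal V_p=\mathrm{span}(X_1,X_2)$, while the assumption $\lalg'=\mathbb R X_3$ (legitimate, since $X_3=[X_1,X_2]\in\lalg'$ is nonzero and the structure constants are constant by local homogeneity) pushes $N_J(X_1,X_3)=[X_2,X_4]-[X_1,X_3]-J[X_2,X_3]-J[X_1,X_4]$ into the complementary plane $\mathrm{span}(X_3,X_4)$, forcing $N_J(X_1,X_3)=0$; your propagation through $N_J(JA,B)=N_J(A,JB)=-JN_J(A,B)$ then kills all frame pairs and contradicts $\dim\mathcal V_p=2$. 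Note, though, that the paper hands you a shortcut you did not use: by Proposition \ref{distinguished-field} the distinguished field is normalized so that $\tau_{T^X}$ has eigenvalue $1$ on it, i.e.\ $N_J(X_1,X_3)=X_1\neq 0$, so the single computation on the pair $(X_1,X_3)$ already gives the contradiction, with no need to propagate vanishing to the remaining pairs or to invoke $N_J\not\equiv 0$. Your upper bound and non-solvable case are standard structure theory and correct: solvability forbids $\lalg'=\lalg$, and otherwise the Levi decomposition forces $\lalg=\mathfrak s\oplus\mathfrak r$ with $\mathfrak s$ three-dimensional semisimple and the one-dimensional radical central (any one-dimensional representation of a semisimple algebra is trivial), whence $\lalg'=\mathfrak s$, $\mathfrak z(\lalg)=\mathfrak r$, and $\lalg=\lalg'\oplus\mathfrak z(\lalg)$, which is exactly what the classification in \cite{BM} and in Section \ref{section2} relies on.
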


\section{Almost complex manifolds of dimension $4$ with solvable $\lalg$}\label{section2}
Now, we consider the classification (up to covering) of homogeneous $4$-dimensional almost complex manifolds with non-degenerate torsion bundle having a solvable associated Lie algebra $\lalg$ (for the classification with a non-solvable $\lalg$ see \cite{BM}).
We restrict our study to the generalization of several significant examples, 
since the complete classification of the almost complex structures of $(M^4,J)$ having solvable $\lalg$ has a very large number of cases and we would risk to become redundant.
Anyway, these generalizations provide the method to make the whole classification when $\lalg$ is solvable. 

\subsection {\large Case $\lalg=\mathbf {A_{4.1}}$}
We start with the classification of the almost complex manifolds having $\mathbf {A_{4.1}}$ as associated Lie algebra since 
$\mathbf {A_{4.1}}$ is the unique nilpotent algebra of dimension 4 which is not decomposable (see e.g. \cite{On3}). 

We recall that if $(e_1,e_2,e_3,e_4)$ is a base of $\mathbf {A_{4.1}}$, 
the Lie algebra $\mathbf {A_{4.1}}$ is defined by $ [e_2,e_4]= e_1$, $[e_3,e_4]= e_2$ and null all the other Lie brackets.
We are going to study the non equivalent almost complex structures $J$ on $(M^4,J)$ 
having the same Lie algebra $\mathbf {A_{4.1}}$.

We have $\lalg' = \langle e_1,e_2\rangle$, $\lalg^{(2)}= 0$ and $\lalg^{2}= \langle  e_1\rangle$. 
Moreover, $\mathfrak {z}(\lalg)=\langle e_1\rangle$ and 
 $\mathcal V_p \neq \lalg' =\langle e_1,e_2\rangle$, otherwise $\mathcal V_p$ would be degenerate. 
So, there are two possibilities: 
$\dim (\mathcal V_p  \cap \lalg' )$ is $1$ or $0$. 

\begin {prop} \label{generKimLee}
If $\lalg$ is the Lie algebra $\mathbf {A_{4.1}}$ associated to a connected locally homogeneous almost complex manifold $(M^4, J)$
with non-degenerate torsion bundle $\mathcal V$, 
then the following facts are equivalent:
\begin{enumerate}
\item [$(a)$]$ \dim (\mathcal V_p  \cap \lalg' )=1$,
\item [$(b)$]$ \mathcal V_{p}$ is non-fundamental,
\item [$(c)$]$ \lalg'  \subseteq \mathcal V_{-2|p}$,
\item [$(d)$]$ \mathcal V_p  \cap \lalg'   = \mathcal V^{\pm}_p$.
\end{enumerate}
\end {prop}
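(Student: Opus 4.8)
The plan is to reduce all four conditions to a single algebraic fact about the position of the vector $T_p^X:=[X,JX]_p$ (with $X$ the distinguished field) relative to $\lalg^2=\mathfrak z(\lalg)=\langle e_1\rangle$. Throughout write $V:=\mathcal V_p=\langle X_p,JX_p\rangle$, and recall that $V$ is a $J$-invariant plane, that $T_p^X\in\lalg'$ but $T_p^X\notin V$ by non-degeneracy, and that $\mathcal V_{-2|p}=V+\langle T_p^X\rangle$ has dimension $3$.

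The pivot I would establish first is that $(a)$ holds if and only if $T_p^X\in\lalg^2$. To see this I pass to the abelianization $\pi\colon\lalg\to\lalg/\lalg'$ and use that for $\mathbf{A_{4.1}}$ the induced bracket $\Lambda^2(\lalg/\lalg')\to\lalg'/\lalg^2$ is non-degenerate, since $[\bar e_3,\bar e_4]=\bar e_2\neq 0$. Hence $T_p^X=[X_p,JX_p]$ lies in $\lalg^2$ exactly when $\pi(X_p),\pi(JX_p)$ are linearly dependent, that is, exactly when $\pi|_V$ has nontrivial kernel; since $\ker(\pi|_V)=V\cap\lalg'$ and $\dim(V\cap\lalg')\le 1$ (the value $2$ being excluded by non-degeneracy, as recalled before the statement), this is precisely $\dim(V\cap\lalg')=1$.

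With the pivot in hand, $(a)\Leftrightarrow(c)$ becomes a dimension count: as $T_p^X\in\lalg'\setminus V$ one has $\mathcal V_{-2|p}\cap\lalg'=(V\cap\lalg')\oplus\langle T_p^X\rangle$, so $\dim(\mathcal V_{-2|p}\cap\lalg')=\dim(V\cap\lalg')+1$, and $\lalg'\subseteq\mathcal V_{-2|p}$ is the same as this dimension being $2$, i.e.\ as $(a)$. For $(a)\Leftrightarrow(b)$ I combine the pivot with $\mathcal V_{-3|p}=\mathcal V_{-2|p}+[V,\langle T_p^X\rangle]$ and $[V,\langle T_p^X\rangle]\subseteq[\lalg,\lalg']=\lalg^2$: if $T_p^X\in\lalg^2=\mathfrak z(\lalg)$ then $[V,T_p^X]=0$ and $\mathcal V_{-3|p}=\mathcal V_{-2|p}$ (non-fundamental); if $T_p^X\notin\lalg^2$ then $T_p^X$ is non-central, $[V,T_p^X]=\langle e_1\rangle$, and $e_1\notin\mathcal V_{-2|p}$ (otherwise $e_1\in\mathcal V_{-2|p}\cap\lalg'=\langle T_p^X\rangle$ would force $T_p^X\in\lalg^2$), so $\mathcal V_{-3|p}=T_pM$ (fundamental).

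The delicate direction, and the one I expect to be the main obstacle, is $(a)\Rightarrow(d)$; the reverse $(d)\Rightarrow(a)$ is immediate because each $\mathcal V_p^{\pm}$ is a line. Fix $Y_0$ spanning $V\cap\lalg'$. By the pivot $T_p^X$ is central, so in the expansion of $\tau_{T_p^X}(Y_0)=N_J(Y_0,T_p^X)$ the terms $[Y_0,T_p^X]$ and $[JY_0,T_p^X]$ drop out, leaving $\tau_{T_p^X}(Y_0)=[JY_0,JT_p^X]-\rho\,JT_p^X$, where $[Y_0,JT_p^X]=\rho\,T_p^X$ since $Y_0\in\lalg'$. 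Now $[JY_0,JT_p^X]\in\lalg'=\langle Y_0,T_p^X\rangle$, and $\{Y_0,JY_0,T_p^X,JT_p^X\}$ is a basis of $\lalg$ (as $Y_0,T_p^X$ span $\lalg'$, $JY_0\in V\setminus\lalg'$, and $JT_p^X\notin\mathcal V_{-2|p}$); hence $\tau_{T_p^X}(Y_0)$ has vanishing $JY_0$-component, while its $T_p^X$- and $JT_p^X$-components are the only ones lying outside $\langle Y_0\rangle\subseteq V$. Since $\tau_{T_p^X}$ maps $V$ into $V$ by construction, the a priori membership $\tau_{T_p^X}(Y_0)\in V$ forces those two components to vanish, whence $\tau_{T_p^X}(Y_0)\in\langle Y_0\rangle$. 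Thus the algebraically defined line $V\cap\lalg'$ is $\tau_{T_p^X}$-invariant, hence an eigenline, i.e.\ $V\cap\lalg'=\mathcal V_p^{+}$ or $\mathcal V_p^{-}$, which is $(d)$. The crux is exactly this identification of $V\cap\lalg'$ with one of the torsion eigenspaces, and the device that unlocks it is pairing the centrality of $T_p^X$ with the invariance $\tau_{T_p^X}(V)\subseteq V$.
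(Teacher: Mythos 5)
Your proof is correct, but it follows a genuinely different architecture from the paper's. The paper proves the cycle $(a)\Rightarrow(b)\Rightarrow(c)\Rightarrow(d)\Rightarrow(a)$ working throughout in the coordinates $e_1,\dots,e_4$ of $\mathbf{A_{4.1}}$: non-fundamentality in $(a)\Rightarrow(b)$ is read off from the explicit subalgebra $\mathcal V_{-2|p}=\langle e_1,e_2,ce_3+de_4\rangle$; the step $(b)\Rightarrow(c)$ is a case analysis on the coefficients of generators of $\mathcal V_{-2|p}$; and $(c)\Rightarrow(d)$ computes $N_J(\xi,\eta)$ in full, imposing that the $\eta$- and $J\eta$-coefficients vanish to solve $t=0$, $y=\tfrac{k}{h}z$ and arrive at \eqref{NProposiz}. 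You instead organize everything around the pivot ``$(a)\Leftrightarrow T_p^X\in\lalg^2$'', obtained from injectivity of the induced bracket $\Lambda^2(\lalg/\lalg')\to\lalg'/\lalg^2$; then $(a)\Leftrightarrow(c)$ is a dimension count, $(a)\Leftrightarrow(b)$ follows from whether $T_p^X$ is central or not, and $(a)\Rightarrow(d)$ kills the terms $[Y_0,T_p^X]$ and $[JY_0,T_p^X]$ by centrality and the remaining components transversal to $\mathcal V_p$ by the constraint $\tau_{T_p^X}(\mathcal V_p)\subseteq\mathcal V_p$. This last device is the same mechanism as the paper's $(c)\Rightarrow(d)$ --- membership of the Nijenhuis image in $\mathcal V_p$ forces the transversal coefficients to vanish --- but executed basis-free. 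What your route buys is conceptual clarity: the role of $\lalg^2=\mathfrak z(\lalg)=\langle e_1\rangle$ is made explicit, and fundamentality is identified directly with non-centrality of $T_p^X$. What the paper's computation buys is the explicit parametrization \eqref{NProposiz00}--\eqref{NProposiz}, which it reuses immediately to write down the distinguished section in Corollary \ref{generA_{4.1}nonfund}; your argument does not produce that normal form.

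Two steps deserve one more line each, though neither is a genuine gap. First, in your fundamental case you assert $[V,T_p^X]=\langle e_1\rangle$ from non-centrality of $T_p^X$ alone; this is not automatic, since a non-central element of $\mathbf{A_{4.1}}$ can centralize a $2$-plane (for instance $e_2$ centralizes $\langle e_1,e_3\rangle$). You need $V\not\subseteq\langle e_1,e_2,e_3\rangle$, which follows either from $V\cap\lalg'=\{0\}$ by Grassmann's formula, or, more robustly, because $\langle e_1,e_2,e_3\rangle$ is abelian while $T_p^X=[X_p,JX_p]\neq 0$. Second, the independence of $Y_0$ and $T_p^X$ (needed for $\lalg'=\langle Y_0,T_p^X\rangle$ and for your basis of $\lalg$) rests on $\mathfrak z(\lalg)\cap\mathcal V_p=\{0\}$, recalled in Section \ref{section1}; it is worth citing explicitly, since under hypothesis $(a)$ the vector $T_p^X$ spans $\mathfrak z(\lalg)$.
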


\begin{proof}$(a) \Rightarrow (b)$.
Let us suppose that the $\dim (\mathcal V_p  \cap \lalg' )=1$, then there exist $k,h \in \mathbb{R}$, with $(k,h) \neq (0,0)$, 
such that $\mathcal V_p  \cap \lalg' = \langle ke_1+he_2\rangle$, hence 
$\mathcal V_p =\langle ke_1+he_2, a e_1+b e_2 +c e_3+d e_4\rangle$ 
for some $a,b,c,d \in \mathbb{R}$ (with $(c,d) \neq (0,0)$, otherwise the vector filed $a e_1+b e_2 +c e_3+d e_4$ is in $\lalg' $). 
In this way we have $\mathcal V_{-2|p}=\langle e_1, e_2, ce_3+de_4\rangle$, so that $\mathcal V_{-2|p}$ is a Lie subalgebra of $\lalg$, 
i.e. $\mathcal V_p$ is non-fundamental.\\
$(b) \Rightarrow (c)$. Since $\mathcal V_{p}$ is non-fundamental, $\mathcal V_{-2|p}$ is a 3-dimensional Lie subalgebra of $\lalg$, 
hence there exist $a,b,c,d,x,y,z,t \in \mathbb R$ such that 
$\mathcal V_{-2|p}$  is generated by 
$$
\left\{\!
\begin{array}{l}
\xi:=a e_1+b e_2 +c e_3+d e_4, \\
J\xi:=x e_1+y e_2 +y e_3+t e_4, \\
\eta:=[\xi,J\xi]=(bt-dy)e_1+ (ct-dz)e_2, \\
\end{array}
\right.
$$
with $(ct-dz,bt-dy)\neq(0,0)$ and $(d,t)\neq (0,0)$;
moreover 
$$[\xi, \eta]=-d(ct-dz)e_1 \in \mathcal V_{-2|p}, \qquad [J \xi, \eta] = -t(ct-dz)e_1 \in \mathcal V_{-2|p}.$$ 

If $(ct-dz)(bt-dy)\neq0$, we have $e_1\in \mathcal V_{-2|p}$ and hence $e_2\in \mathcal V_{-2|p}$.
If $(ct-dz)=0$ and $(bt-dy)\neq0$, $e_1 \in \mathcal V_{-2|p}$ and $c e_3+d e_4$ is proportional to $y e_3+t e_4$ 
so that $e_2 \in \mathcal V_{-2|p}$. 
If $(ct-dz)\neq0$ and $(bt-dy)=0$, then $\langle e_1,e_2\rangle \subseteq \mathcal V_{-2|p}$.
In all cases $\lalg'  \subseteq \mathcal V_{-2|p}$. \\
$(c) \Rightarrow (d)$. Since $\langle e_1,e_2\rangle =\lalg' \subseteq \mathcal V_{-2|p}$, there exist $k,h \in \mathbb{R}$, not both null, 
such that $ ke_1+he_2 \in \mathcal V_p $. Let us define 
\begin{equation} \label{NProposiz00}
\left\{\!
\begin{array}{l}
\xi=ke_1+he_2, \quad (k,h) \neq(0,0),  \\
J\xi=a e_1+b e_2 +c e_3+d e_4, \quad  (c,d) \neq(0,0), \\
\eta=[\xi,J\xi]=hde_1, \quad  hd \neq 0, \\
J\eta=	x e_1+y e_2 +z e_3+t e_4. \\
\end{array}
\right.
\end{equation}
We have that $\langle \xi, J\xi \rangle=\mathcal V_p$  and that 
the Nijenhuis tensor of $\xi$ and $\eta$ is 
$$N_J(\xi,\eta)= \dfrac{1}{h}(ct-dz)\xi +\left( -\dfrac{k}{h^2d}(ct-dz) + \dfrac{1}{hd}(bt-dy) \right) \eta -\dfrac{t}{d}J\eta,$$
and since the coefficients of $\eta$ and $J\eta$ must be zero, we obtain $t=0$ and $y= \dfrac{k}{h}z$, 
from which 
\begin{equation} \label{NProposiz}
N_J(\xi,\eta)= - \dfrac{dz}{h}\xi.
\end{equation}
This means that $\xi$ is the distinguished direction that generates $\mathcal V^{+}_p$ or $\mathcal V^{-}_p$ 
(it depends on the sign of the coefficients $d,h,z$ appearing ahead $\xi$ in the calculation of $N_J$).  
We have actually proved $(d)$.\\
$(d) \Rightarrow (a)$ is trivial. 
\end{proof}

We note that since $\mathfrak z(\lalg) \cap \mathcal V_p = \{0\}$ and $e_1 \in \mathfrak z(\lalg)$, the subspace 
$\langle e_1\rangle$ is never contained in $\mathcal V_p$
(in agreement with the fact that $h$, which appears into the proof of Proposition \ref{generKimLee}, is never zero). 

\begin {prop} \label{generKimLee2}
If $\lalg$ is the Lie algebra $\mathbf {A_{4.1}}$ associated to a connected locally homogeneous almost complex manifold $(M^4, J)$
with non-degenerate torsion bundle $\mathcal V$,
then the following facts are equivalent:
\begin{enumerate}
\item [$(a')$]$ \dim (\mathcal V_p  \cap \lalg' )=0$,
\item [$(b')$]$ \mathcal V_{p}$ is fundamental,
\item [$(c')$]$ \lalg'  \nsubseteq \mathcal V_{-2|p}$,
\item [$(d')$]$ \mathcal V_p  \cap \lalg'   = \{0\}$.
\end{enumerate}
\end {prop}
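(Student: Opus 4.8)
The plan is to observe that Proposition \ref{generKimLee2} is nothing but the logical negation of Proposition \ref{generKimLee}: each primed condition is the exact complement of the corresponding unprimed one. Since the four conditions $(a)$--$(d)$ have already been shown equivalent, negating the whole chain will immediately yield the equivalence of $(a')$--$(d')$, with no new computation required. So the work reduces to verifying four dichotomies that make ``negation'' meaningful, after which the conclusion is purely formal.

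First I would record the dimension counts coming from the filtration. Because $\dim \mathcal V_p = 2 = \dim \lalg'$ and $\mathcal V_p \neq \lalg'$ (otherwise $\mathcal V_p$ would be degenerate), the intersection $\mathcal V_p \cap \lalg'$ can only have dimension $0$ or $1$; hence $(a')$ is precisely the negation of $(a)$. Likewise, using property $(4)$ the space $\mathcal V_{-2|p}$ is always $3$-dimensional with base $(X_p, JX_p, T_p^X)$, so in the chain $\mathcal V_{-2|p} \subseteq \mathcal V_{-3|p} \subseteq T_pM$ the space $\mathcal V_{-3|p}$ is forced to have dimension $3$ or $4$. Thus ``$\mathcal V_p$ fundamental'' ($\mathcal V_{-3|p} = T_pM$) and ``$\mathcal V_p$ non-fundamental'' ($\mathcal V_{-2|p} = \mathcal V_{-3|p}$) are mutually exclusive and exhaustive, so $(b')$ is exactly the negation of $(b)$. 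The statement $(c')$ is literally the negation of $(c)$.

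The one correspondence that is not a tautology is $(d')$ versus the negation of $(d)$, and this is where I would be slightly careful. If $(d')$ holds then $\mathcal V_p \cap \lalg' = \{0\} \neq \mathcal V_p^{\pm}$, so the negation of $(d)$ holds. Conversely, suppose $\mathcal V_p \cap \lalg' \neq \mathcal V_p^{\pm}$; were the intersection $1$-dimensional, the implication $(a) \Rightarrow (d)$ of Proposition \ref{generKimLee} would force it to equal $\mathcal V_p^{\pm}$, a contradiction, so by the dimension dichotomy above it must be $\{0\}$, giving $(d')$. Hence each primed condition is the negation of its unprimed counterpart.

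With these four identifications in hand, the proof concludes formally: Proposition \ref{generKimLee} gives $(a) \Leftrightarrow (b) \Leftrightarrow (c) \Leftrightarrow (d)$, and negating an equivalence preserves it, so $(a') \Leftrightarrow (b') \Leftrightarrow (c') \Leftrightarrow (d')$. The main (and only) obstacle is the $(d')$ step, since $(d)$ asserts equality with the specific eigenspace $\mathcal V_p^{\pm}$ rather than merely a statement about dimension; I would resolve it by invoking the already established equivalence $(a) \Leftrightarrow (d)$. As an alternative, one could instead mimic the direct cyclic argument of Proposition \ref{generKimLee}, proving $(a') \Rightarrow (b') \Rightarrow (c') \Rightarrow (d') \Rightarrow (a')$ by the same elementary bracket computations in $\mathbf{A_{4.1}}$, but this would merely duplicate work, whereas the negation argument is both shorter and more transparent.
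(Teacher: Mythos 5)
Your proof is correct and takes essentially the approach the paper intends: the paper states Proposition \ref{generKimLee2} with no proof at all, treating it as an immediate consequence of Proposition \ref{generKimLee} via exactly the complementation argument you make explicit (each primed condition is the negation of its unprimed counterpart, thanks to the dichotomies $\dim(\mathcal V_p \cap \lalg') \in \{0,1\}$ and fundamental versus non-fundamental). Your treatment of $(d')$ via the established implication $(a) \Rightarrow (d)$ correctly closes the only non-tautological point, though it could be shortened by simply noting that $(a')$ and $(d')$ are literally the same condition.
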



\noindent 
Now we distinguish the cases 
with non-fundamental $\mathcal V_p$ and fundamental $\mathcal V_p$.

\subsubsection{Case $\mathbf {A_{4.1}}$ with non-fundamental $\mathcal V_p$.}\label{A4.1n.f.} 
\

Since for Proposition \ref{generKimLee} we have that $\mathcal V_p  \cap \lalg'   = \mathcal V^{\pm}_p$ when $\mathcal V_p$ is non-fundamental, we can write the distinguished sections of $\mathcal V^{\pm}_p$ in a simpler manner, that is, 
as a linear combination of the base of $\lalg'$. 
For example, when $\mathcal V_p  \cap \lalg'   = \mathcal V^{+}_p$, we obtain the following
\begin{cor}\label{generA_{4.1}nonfund}
If $\mathbf {A_{4.1}}$ is the Lie algebra $\lalg$ associated to a locally homogeneous almost complex manifold  $(M^4, J)$
with non-fundamental and non-degenerate torsion bundle $\mathcal V$ such that $\mathcal V_p  \cap \lalg'   = \mathcal V^{+}_p$ and 
$\xi$ is the distinguished section of $\mathcal V^+_p$, we have 
$$
\left\{\!
\begin{array}{l}
\xi=ke_1-dze_2,  \\
J\xi=a e_1+b e_2 +c e_3+d e_4, \quad d \neq 0, \\
\eta=[\xi,J\xi]=-d^2ze_1, \quad z \neq 0\\
J\eta=	x e_1-\dfrac{k}{d} e_2 +z e_3, \\
\end{array}
\right.
$$
with $k,a,b,c,d,x,z \in \mathbb{R}.$
\end{cor}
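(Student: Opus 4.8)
The plan is to obtain this as an immediate specialization of the computation already carried out in the proof of Proposition \ref{generKimLee}, the only new ingredient being the normalization from Proposition \ref{distinguished-field} that singles out the distinguished section. First I would use the non-fundamentality hypothesis together with Proposition \ref{generKimLee} to get $\mathcal V_p\cap\lalg'=\mathcal V^{\pm}_p$, which under the stated assumption equals $\mathcal V^+_p$. Since $\xi$ is the distinguished section of $\mathcal V^+_p$, it lies in $\mathcal V^+_p=\mathcal V_p\cap\lalg'\subseteq\lalg'=\langle e_1,e_2\rangle$, so I may write $\xi=ke_1+he_2$. Writing $J\xi=ae_1+be_2+ce_3+de_4\in\mathcal V_p$ and using only the two nonzero brackets $[e_2,e_4]=e_1$ and $[e_3,e_4]=e_2$ of $\mathbf{A_{4.1}}$, a one-line computation gives $\eta=[\xi,J\xi]=hde_1$, exactly as in \eqref{NProposiz00}; non-degeneracy forces $hd\neq0$, hence in particular $d\neq0$.

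Next I would transcribe the Nijenhuis computation of Proposition \ref{generKimLee} verbatim: setting $J\eta=xe_1+ye_2+ze_3+te_4$ and requiring that the $\eta$- and $J\eta$-components of $N_J(\xi,\eta)$ vanish (so that $N_J(\xi,\eta)$ stays a multiple of $\xi$ inside $\mathcal V_p$) yields $t=0$ and $y=\tfrac{k}{h}z$, and hence the reduced expression $N_J(\xi,\eta)=-\tfrac{dz}{h}\xi$ of \eqref{NProposiz}. In the language of the map $\tau$, since $T^\xi:=[\xi,J\xi]=\eta$, this exactly says that $\xi$ is an eigenvector of $\tau_{T^\xi}$ with eigenvalue $-\tfrac{dz}{h}$, confirming that $\xi$ generates one of the two eigenspaces $\mathcal V^{\pm}_p$.

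The final and only genuinely new step is to impose the normalization of Proposition \ref{distinguished-field}: the distinguished section of $\mathcal V^+$ is the one for which $\tau_{T^\xi}$ has eigenvalue exactly $1$, i.e.\ $N_J(\xi,\eta)=\xi$. Comparing with the coefficient above forces $-\tfrac{dz}{h}=1$, that is $h=-dz$, and since $h\neq0$ this also gives $z\neq0$. Substituting $h=-dz$ into $\xi=ke_1+he_2$, into $\eta=hde_1$, and into $y=\tfrac{k}{h}z=-\tfrac{k}{d}$ produces precisely $\xi=ke_1-dze_2$, $\eta=-d^2ze_1$ and $J\eta=xe_1-\tfrac{k}{d}e_2+ze_3$, as claimed. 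I expect the main obstacle to be purely expository rather than computational, namely correctly reading the distinguished-field property of Proposition \ref{distinguished-field} as the eigenvalue-$1$ condition $N_J(\xi,\eta)=\xi$ and matching it against $-\tfrac{dz}{h}$; once that identification is made, the corollary reduces to substitution into formulas already established for Proposition \ref{generKimLee}.
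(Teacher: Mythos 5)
Your proof is correct and takes essentially the same route as the paper's own two-line argument: both rely on the computation in the proof of Proposition \ref{generKimLee} (equations \eqref{NProposiz00} and \eqref{NProposiz}) and then impose the eigenvalue-$1$ normalization of the distinguished section, $-\tfrac{dz}{h}=1$, substituting $h=-dz$ back to obtain the stated frame. You merely spell out the intermediate steps (the bracket computation, $t=0$, $y=\tfrac{k}{h}z$, and $z\neq 0$ from $h\neq 0$) that the paper leaves implicit.
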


\begin{proof}
From the proof of Proposition \ref{generKimLee} we obtain the equation \eqref{NProposiz} for any $\xi \in \mathcal V^+_p$.
Since we want that $\xi$ is the distinguished field in $p$, we have to put $ \dfrac{dz}{h}=-1$ and substitute the result in \eqref{NProposiz00}. 
\end{proof}

In \cite{KL}, Kim and Lee gave an example of a manifold which belongs to the class of mandifolds 
considered in Proposition \ref{generKimLee} and in Corollary \ref{generA_{4.1}nonfund}.
They found an almost complex manifold with non-degenerate torsion bundle, with non-fundamental $\mathcal V_p$,
having $\mathbf {A_{4.1}}$ as associated Lie algebra 
and with symmetry algebra $aut_p(M,J)$ of dimension 4. We summarize it below.

\begin{ex}[Kim and Lee]\label{KimLee}
If $(z_1,z_2)$ are complex coordinates of $\mathbb{C}^2$ ($z_j= x_j+iy_j$, for $j=1,2$) and 
$$\dfrac{\partial}{\partial z_j}= \dfrac{1}{2}\left(\dfrac{\partial}{\partial x_j} - i \dfrac{\partial}{\partial y_j}\right), \quad 
\dfrac{\partial}{\partial \bar{z}_j}= \dfrac{1}{2}\left(\dfrac{\partial}{\partial x_j} + i \dfrac{\partial}{\partial y_j}\right),$$
the $(1,0)$ vector fields 
$$Z_1= \dfrac{\partial}{\partial z_1} -2\bar z_1 i \dfrac{\partial}{\partial z_2},$$
$$Z_2= (z_1-\bar z_1)\dfrac{\partial}{\partial z_1} +(z_1-\bar z_1)\bar z_1 \dfrac{\partial}{\partial \bar z_1}+
(-2i-z^2_1+\bar z^2_1)\dfrac{\partial}{\partial z_2}+(-z^2_1+\bar z^2_1)\dfrac{\partial}{\partial \bar z_2}, $$
define an almost complex structure $J$ on $\mathbb{R}^4$. 
A base of germs of the infinitesimal automorphisms $V_1,V_2,V_3,V_4$ of $(\mathbb{R}^4, J)$ is given by 
$$
\begin{array}{l}
V_1= Z_2+ \bar Z_2,  \\[2pt]
V_2=  Z_1+ \bar Z_1 - i(z_1  -\bar z_1)V_1,  \\[2pt]
V_3=  i(Z_1- \bar Z_1) - (z_1  +\bar z_1)V_1 - (2z_1 \bar z_1 + z_2  +\bar z_2)V_2,  \\[2pt]
V_4=  i(Z_2- \bar Z_2) - 2i(z_1  -\bar z_1)V_2 - ( z_1  -\bar z_1)^2 V_1. \\
\end{array}
$$
They generate the symmetry algebra $aut_p(\mathbb{R}^4, J)$ for any point $p$. 
\end{ex}

\vspace{5pt}
We are going to analyze the above example in light of our results. 
Computing the distinguished section $X$ of $\mathcal V^+_p$, we obtain that 
the adapted frame of $(\mathbb{R}^4, J)$ is given by: 
$$X=  \dfrac{1}{\sqrt2} \Im Z_1, \quad JX= \dfrac{1}{\sqrt2} \Re Z_1, \quad T= \dfrac{1}{2} \Re Z_2, \quad JT= - \dfrac{1}{2} \Im Z_2$$
where 
$$\Re Z_1=  \dfrac{1}{2}\dfrac{\partial}{\partial x_1} -x_1\dfrac{\partial}{\partial x_2} + y_1 \dfrac{\partial}{\partial y_2}, \quad 
\Im Z_1=  -\dfrac{1}{2}\dfrac{\partial}{\partial y_1} +x_1\dfrac{\partial}{\partial y_2} + y_1 \dfrac{\partial}{\partial x_2}, $$
$$\Re Z_2=  -\dfrac{\partial}{\partial y_2}, \quad
\Im Z_2=  2y_1\dfrac{\partial}{\partial x_1} -(4x_1y_1+1)\dfrac{\partial}{\partial x_2}.$$
Moreover, an easy calculation gives that 
$[ \Re Z_1, \Im Z_1 ]= - \Re Z_2,$  and that $[ \Im Z_1, \Im Z_2 ]= -2 \Re Z_1,$
with null all the other Lie brakets.
Hence, it holds
\begin{equation}\label{KMrule}
[X,JX]=T, \quad [X,JT]=JX,
\end{equation}
which gives exactly $\mathbf {A_{4.1}}$ as Lie algebra $\lalg$ associated to $(\mathbb{R}^4, J)$.
We also note that the torsion bundle $\mathcal V$ is non-degenerate and non-fundamental, 
since $\mathcal V_{-2|p}=\langle X,JX,T\rangle$ is a subalgebra of $\mathbf {A_{4.1}}$.
Moreover, $\lalg'= \langle JX, T\rangle$, the derived algebra $\lalg^{(2)}$ is null 
and the descending central series is $\lalg^{2}= \langle T\rangle$; in particular we have that 
$\mathcal V_p  \cap \lalg'   = \mathcal V^{-}_p$.
Since the manifold $(\mathbb{R}^4, J)$ is homogeneous, its associated Lie algebra $\lalg$
is formed by the left invariant vector fields under the action of the group of the automorphisms on $(\mathbb{R}^4, J)$.
One can check that the infinitesimal automorphisms $V_1,V_2,V_3,V_4$, which generate the symmetry algebra $aut_p(M,J)$, 
generate also the Lie algebra $\ralg$ of the right invariant vector fields
(indeed $[V_j,X_j]=0$, for all $j=1,...,4$,
where the $X_j$ are the fields of the adapted frame) and that 
$$[V_2,V_3]=-2V_1, \qquad [V_3,V_4]=4V_2,$$
and null all the other Lie brackets.
There is an isomorphism from $\lalg$ to $\ralg$ given by
$$\left\{ \!
\begin{array}{ccc}
-2V_1 & \mapsto & T \\
V_2  & \mapsto  & JX \\
V_3  & \mapsto  & -X \\
V_4  & \mapsto  & -4JT. \\
\end{array}\\
\right.
$$

\begin{oss}\label{sphere}
The vector fields $V_1,V_2,V_3$ are the infinitesimal automorphisms of the Heienberg group $H_t= \{ Re z_2 + |z_1|^2=t\}$, 
with $t \in \mathbb R$, which is a 3-dimensional CR manifold. Moreover, $V_1,V_2,V_3$ act transitively on each $H_t$ and $V_4$ is transversal to $H_t$.
So the manifold $(\mathbb R^4,J)$ is homogeneous and it is foliated with spherical hypersurfaces.
\end{oss}

\subsubsection{Case $\mathbf {A_{4.1}}$ with fundamental $\mathcal V_p$.}\

A complete classification in this case involves the use of too many parameters and it would be heavy
(although in principle this can be done). 
We provide only an easy example which is obviously non equivalent to the Kim and Lee one.
We take
$$\xi=e_3, \quad J\xi=e_4, \quad \eta=[\xi,J\xi]=e_2, \quad J\eta=e_1-e_3,$$
so that $N_J(\xi,\eta)= J\xi$.
An easy calculation gives that the adapted frame is 
$$X=\dfrac{1}{\sqrt{2}}(e_3+ e_4),  \quad JX=\dfrac{1}{\sqrt{2}}(e_4- e_3), \quad
T=\dfrac{2}{\sqrt{2}}e_2, \quad
JT=\dfrac{2}{\sqrt{2}}(e_1-e_3). 
$$
Here the almost complex structure $J$ is non equivalent to that of the Kim and Lee example: 
it is easy to check that not only $\mathcal V_p  \cap \lalg^1  = \{0\}$ holds, but $\mathcal V_{p}$ is fundamental too.

\subsection{\large Case $\lalg= \mathbf {A_{3.2} \oplus A_1}$}

In \S.\ref{A4.1n.f.} we analyzed the classification of the manifolds foliated with spherical hypersurfaces and having $\mathbf {A_{4.1}}$ as associated Lie algebra (see Remark \ref{sphere}). 
In this section 
we are interested in finding the existence of 4-dimensional locally homogeneous almost complex manifold 
with non-degenerate torsion bundle and  
with a foliation $M^3 \times \mathbb{R}$ such that 
$M^3$ is non-equivalent to a hypersphere of dimension $3$ (see \cite{Ca2}). 
Our propose is also to find examples of such manifolds and  extend them to a class of equivalent examples. 
To this aim, we consider $\mathbf {A_{3.2} \oplus A_1}$ 
as associated Lie algebra $\lalg$ of $M^3 \times \mathbb{R}$, defined by
\begin{equation}
[e_1,e_3]= e_1, \; [e_2,e_3]= e_1+e_2, \; [e_1,e_2]= 0, \; [e_4,e_j]=0, \quad j=1,2,3,
\end{equation} 
where $\mathbf {A_{3.2}} = \langle e_1,e_2,e_3\rangle$ and $\mathbf {A_{1}} = \langle e_4\rangle$.  
We have $\lalg'=\langle e_1,e_2\rangle$, $\lalg^{(2)}=0$, $\lalg^{2}=\lalg'$ and $\mathfrak{z}(\lalg)=\mathbf {A_1}$. 
Since $\mathcal V_p$ is non-degenerate, we have that $\mathcal V_p \neq \lalg'$, so 
$\dim (\mathcal V_p \cap \lalg') =1$ or $\dim (\mathcal V_p \cap \lalg') =0$. 

\begin{prop}\label{prop-two-cases}
If $\mathbf {A_{3.2} \oplus A_1}$ is the Lie algebra associated to 
a connected locally homogeneous almost complex manifold $(M^4,J)$ with non-degenerate torsion bundle $\mathcal V$,
then the following facts are equivalent:
\begin{enumerate}
\item [$(a)$]$\dim (\mathcal V_p  \cap \lalg')=1,$ 
\item [$(b)$]$\dim (\mathcal V_{-2|p}  \cap \lalg' )=2$.
\end{enumerate}
Moreover, the following facts are equivalent too:
\begin{enumerate}
\item [$(a')$]$\dim (\mathcal V_p  \cap \lalg' )=0,$ 
\item [$(b')$]$\dim (\mathcal V_{-2|p}  \cap \lalg' )=1$.
\end{enumerate}
\end{prop}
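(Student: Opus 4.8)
The plan is to prove this by direct computation using the Lie algebra structure of $\mathbf {A_{3.2} \oplus A_1}$, mimicking the argument of Proposition \ref{generKimLee}. Recall that $\lalg' = \langle e_1, e_2\rangle$ with $[e_1,e_3]=e_1$, $[e_2,e_3]=e_1+e_2$, and $e_4$ central. The key structural fact I would exploit is that $\lalg'$ is $2$-dimensional and abelian, that $[\lalg,\lalg'] \subseteq \lalg'$ (so $\lalg'$ is an ideal), and crucially that the bracket of any element with something in $\lalg'$ lands back in $\lalg'$. Since $\mathcal V_p$ is spanned by $(X_p, JX_p)$ for a distinguished field $X$, and $\mathcal V_{-2|p} = \mathcal V_p + \langle [X,JX]_p\rangle$ is $3$-dimensional, the two equivalences amount to tracking how $\mathcal V_p$ and its first bracket intersect the ideal $\lalg'$.

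\textbf{The two equivalences.}
First I would prove $(a) \Leftrightarrow (b)$. For the implication $(a) \Rightarrow (b)$, assume $\dim(\mathcal V_p \cap \lalg') = 1$, so one basis vector of $\mathcal V_p$ lies in $\lalg'$, say $\xi = \alpha e_1 + \beta e_2 \in \mathcal V_p \cap \lalg'$ with $(\alpha,\beta)\neq(0,0)$, and the other, $J\xi$, has a genuine $e_3$ or $e_4$ component. Then $\mathcal V_{-2|p} = \langle \xi, J\xi, [\xi,J\xi]\rangle$; since $\xi \in \lalg'$ already contributes one dimension to $\mathcal V_{-2|p}\cap\lalg'$, I must show a second independent vector of $\mathcal V_{-2|p}$ lies in $\lalg'$. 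The natural candidate is $[\xi, J\xi]$: because $\xi \in \lalg'$ and $\lalg'$ is an ideal, $[\xi,J\xi] \in \lalg'$, and I would verify it is independent of $\xi$ using non-degeneracy (otherwise the torsion bundle would degenerate, as in Proposition \ref{generKimLee}). That gives $\dim(\mathcal V_{-2|p}\cap\lalg')=2$. Conversely, for $(b)\Rightarrow(a)$, if $\mathcal V_{-2|p}\cap\lalg'$ is $2$-dimensional, then $\lalg' \subseteq \mathcal V_{-2|p}$ (both are $2$-dimensional), and I would show this forces $\mathcal V_p$ to meet $\lalg'$ nontrivially by a dimension count inside the $3$-dimensional space $\mathcal V_{-2|p}$ containing the $2$-plane $\lalg'$, together with the observation that $\mathfrak z(\lalg)\cap\mathcal V_p=\{0\}$ (so $e_4 \notin \mathcal V_p$) rules out the degenerate configuration.

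\textbf{The complementary case and the main obstacle.}
The equivalence $(a')\Leftrightarrow(b')$ should follow by elimination once $(a)\Leftrightarrow(b)$ is established: since $\mathcal V_p\neq\lalg'$ forces $\dim(\mathcal V_p\cap\lalg')\in\{0,1\}$ and $\mathcal V_{-2|p}$ is $3$-dimensional containing the candidate intersections, the values in $(a')$ and $(b')$ are exactly the complementary possibilities to those in $(a)$ and $(b)$. I would nonetheless give the direct argument: when $\dim(\mathcal V_p\cap\lalg')=0$, the bracket $[\xi,J\xi]$ still lies in $\lalg'$ (as $\lalg'$ is an ideal and both $\xi,J\xi$ project nontrivially onto $e_3$), contributing exactly one dimension to $\mathcal V_{-2|p}\cap\lalg'$, giving $(b')$. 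I expect the main obstacle to be the $(a')\Rightarrow(b')$ direction in the fundamental case: I must show that the single bracket $[\xi,J\xi]$ genuinely lands in $\lalg'$ and is nonzero, which requires checking that $\xi,J\xi$ cannot both avoid the $e_3$-direction while spanning a $J$-invariant $2$-plane transverse to $\lalg'$. This is where I would lean on the explicit structure constants of $\mathbf {A_{3.2}}$ and on non-degeneracy, computing $[\xi,J\xi]$ for a general $J$-invariant plane and confirming its $\lalg'$-component is forced nonzero — the algebra here being less symmetric than $\mathbf {A_{4.1}}$ is precisely what makes this step the delicate one.
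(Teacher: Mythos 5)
Your proposal is correct and is essentially the paper's own argument: $(a)\Rightarrow(b)$ rests on the fact that $[\xi,J\xi]$ lies in $\lalg'$ and, by non-degeneracy, is independent of $\xi$ (the paper runs this same computation in coordinates, phrased as a contradiction), $(b)\Rightarrow(a)$ is the identical Grassmann dimension count, and $(a')\Leftrightarrow(b')$ follows by elimination, exactly as the paper leaves implicit. Two small clarifications: the step you flag as the ``main obstacle'' is automatic, since every bracket lies in $\lalg'=[\lalg,\lalg]$ by the very definition of the derived algebra and is nonzero by non-degeneracy; and in $(b)\Rightarrow(a)$ what excludes $\dim(\mathcal V_p\cap\lalg')=2$ is not the center condition $\mathfrak z(\lalg)\cap\mathcal V_p=\{0\}$, but the fact (stated in the paper just before the proposition, and which you correctly use later) that non-degeneracy forces $\mathcal V_p\neq\lalg'$, because $\lalg'$ is abelian.
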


\begin{proof}
$(a) \Rightarrow (b)$.
Let us assume, by contradiction, that $\dim (\mathcal V_{-2|p}  \cap \lalg' )=1$, we have that 
$\mathcal V_{-2|p}  \cap \lalg' = \langle ae_1+be_2\rangle$, for certain $a, b \in \mathbb{R}$ with  $(a,b) \neq (0,0)$, and 
$ \mathcal V_{p} = \langle ae_1+be_2, xe_1+ye_2+ze_3+te_4 \rangle$, for certain $x,y,z,t \in \mathbb{R}$. As a consequence, 
$\mathcal V_{-2|p}=\langle e_1,e_2,ze_3+te_4\rangle$, a contradiction.
\\
The proof of $(b)\Rightarrow (a)$  is straightforward (it is sufficient to use Grassmann theorem on the dimension 
of $\mathcal V_{-2|p}  + \lalg'$ considering that $\lalg' \subseteq \mathcal V_{-2|p}$).  
\end{proof}
According to Proposition \ref{prop-two-cases} we consider the following two cases.

\subsubsection{Case $\mathbf {A_{3.2} \oplus A_1}$ with $\dim (\mathcal V_p  \cap \lalg' )=1$.}\

When $\dim (\mathcal V_p  \cap \lalg' )=1$, we define
$$
\left\{ \!
\begin{array}{l}
\xi=ke_1+e_2,  \\
J\xi=a e_1+b e_2 +c e_3+d e_4, \quad c \neq 0, \\
\eta=[\xi,J\xi]=c(k+1)e_1+ce_2, \\
J\eta=	(2ac+(k-1)(y-2bc)) e_1+ye_2 +2c^2 e_3+te_4, \quad t \neq 2cd, \\
\end{array} 
\right.
$$
with $k,a,b,c,d,x,y,t \in \mathbb{R}$. Here we can assume that the coefficient of $e_2$ in the definition of $\xi$ is 1, 
because it is not null: if it was null, $\xi,J\xi$ and $\eta$ would be dependent one each other 
($\xi$ would be proportional to $\eta$); the condition $t \neq 2cd$ also arises from the independence of $\xi,J\xi,\eta,J\eta$.
A calculation gives that 
$$N_J(\xi,\eta) = c(2bc-y) \xi -c^2J \xi.$$
Note that the condition $\dim (\mathcal V_p  \cap \lalg' )=1$ implies that $\mathcal V_p$ is non-fundamental, but it is just a sufficient condition. 
We provide an example of such a manifold.

\begin{ex}
We give an homogeneous space $M^3 \times \mathbb{R}$ with $M^3$ non-equivalent to a hypersphere and with $\mathbf {A_{3.2} \oplus A_1}$ as associated Lie algebra.
We take 
$$
\begin{array}{l}
Z_1= (1-ix_1+ix_2)\partial_1 -ix_2\partial_2 -i\partial_3-i\partial_4, \\[2pt]
Z_2= -i(x_1+x_2)\partial_1+ (1-ix_2)\partial_2-i\partial_3,
\end{array}
$$
as $(1,0)$-vector fields that define an almost complex structure $J$ on $M^3 \times \mathbb{R}$.
In this way we have that $J$ satisfies 
$Je_1=e_3+e_4, \; Je_2=e_3,$
where 
$e_1=\partial_1,$ \,
$e_2=\partial_2,$ \,
$e_3=(x_1+x_2)\partial_1+x_2\partial_2+\partial_3,$ \,
$e_4=\partial_4.$
We have $\mathcal V_p=\langle e_2,e_3\rangle$, so that $\mathcal V_p$ is non-fundamental and non-degenerate.
The adapted frame of $M^3 \times \mathbb{R}$ results 
$$
X=\dfrac{1}{\sqrt{2}}(e_2-e_3),\quad
JX=\dfrac{1}{\sqrt{2}}(e_2+e_3),\quad
T=e_1+e_2,\quad
JT=2e_3+e_4,
$$
hence the associated Lie algebra $\lalg$ of $M^3 \times \mathbb{R}$ is $\mathbf {A_{3.2} \oplus A_1}$.
We have that $\mathcal V_p$ is the holomorphic tangent space $H_pM^3$ of $M^3$, that $\mathcal V_{-2|p}=\mathbf {A_{3.2}}$ and that $T_p \mathbb{R}=\mathbf {A_{1}}$.  
If $\lalg$ is the Lie algebra of the left invariant vector fields, the 
Lie algebra $\ralg$ of the right invariant vector fields is generated by
$$
\begin{array}{l}
W_1= \dfrac{1}{2}e^{x_3}(Z_1 +\bar{ Z_1 }), \\[5pt]
W_2= \dfrac{1}{2}x_3e^{2x_3}Z_1 +\dfrac{1}{2}x_3e^{2x_3} \bar{ Z_1 } + \dfrac{1}{2}e^{x_3}Z_2 +\dfrac{1}{2}e^{x_3}\bar{ Z_2 }, \\[5pt]
W_3= -\dfrac{1}{2}(x_1+x_2)Z_1 - \dfrac{1}{2}(x_1+x_2)\bar{ Z_1 }-\dfrac{1}{2}(x_2+i)Z_2 - \dfrac{1}{2}(x_2-i)\bar{ Z_2 }, \\[5pt]
W_4= -\dfrac{1}{2}i(Z_1 - \bar{ Z_1 } -Z_2 + \bar{ Z_2 }).\\ 
\end{array}
$$
\end{ex}
\begin{oss}
Since $\mathcal V_{-2|p}$ is a subalgebra of $\mathbf {A_{3.2} \oplus A_1}$
and $\mathbf {A_1}$ is the center of $\mathbf {A_{3.2} \oplus A_1}$, there is a foliation of $(M^4,J)$ 
such that the action given by the vector fields of
$\mathcal V_{-2|p}$ sends points of a leaf, having $\mathcal V_{-2|p}$ as Lie algebra, in points of the same leaf, 
and the field $e_4$ sends points of a leaf in points of another leaf.  
Note that we have actually obtained a manifold $M^3$ which is a non-degenerate CR manifold,
in particular, $M^3$ is not diffeomorphic to a hypersphere of dimension $3$. 
For more details we refer the reader to \cite{Ca2} p.70.  
\end{oss}

\subsubsection{Case $\mathbf {A_{3.2} \oplus A_1}$ with  $\dim (\mathcal V_p  \cap \lalg' )=0$.}\

When $\dim (\mathcal V_p  \cap \lalg' )=0$, a classification of corresponding manifolds involves a lot of parameters, 
so we just consider some examples to show the existence of such manifolds.
If we take 
$$
X=e_3,  \quad
JX=e_1+ e_4,  \quad
T=[\xi,J\xi]=-e_1, \quad
JT=	-e_2 +e_4, 
$$
we have that $N_J(X,T)=X$, that is, $X$ is the distinguished section of $\mathcal V^+_p$ 
and $(X,JX,T,JT)$ is the adapted frame. 

We note that $\mathcal V_p$ is non-fundamental in this example, but this not holds in general. 
Indeed, we have the following counterexample:  
$$
\xi=e_3,  \quad
J\xi=e_2+ e_4,  \quad
\eta=[\xi,J\xi]=-e_1-e_2, \quad
J\eta=	e_2 +2e_3.
$$
The Nijenhuis tensor is $N_J(\xi,\eta)=-2\xi$, hence $\langle \xi\rangle=\mathcal V^-_p$, and $\mathcal V_p$ is fundamental.


\section{Homogeneous almost complex manifolds of dimension $4$}\label{section3}

In this section we assume that $(M,J)$ is a four dimensional \emph{connected, homogeneous} almost complex manifold with non degenerate torsion bundle, that is, the action of the connected component $G$ of the Lie group $Aut(M,J)$ of the automorphisms of $(M,J)$ on $(M,J)$ is transitive.
As a consequence, we have that there exists a diffeomorphism $M \cong G/G_p,$ where
$G_p$ is the isotropy subgroup of $G$ fixing a point $p$ of $M$.
If $(F, \pi)$ is the double covering of $(M,J)$ given by
\begin{equation} \label{coveringF}
 \pi: F \rightarrow M,
\end{equation}
where we recall that $F$ is endowed with an $\{e\}$-structure and $M$ is endowed with an $\mathbb Z_2$-structure. 
Now, we have two possibilities based on the fact that $F$ is connected or non-connected.

\begin{prop} \label{connessioneF}
Given a connected homogeneous almost complex manifold $(M,J)$ of dimension $4$ with non-degenerate torsion bundle, 
if $(F, \pi)$ is the double covering of $(M,J)$ given by \eqref{coveringF} 
and $G$ is the connected component of $Aut(M,J)$, we have that:
\begin{enumerate}
\item[(i)] when $F$ is connected, then $F \cong G$ and it has subgroups of order $2$; in particular, $M$ is a Lie group if and only if
					 the isotropy subgroup $G_p$ of $G$ is in the center of $G$; 
\item[(ii)] when $F$ is non-connected, then $F \cong G \times \{0,1\}$ and $M \cong G$. 
\end{enumerate} 
\end{prop}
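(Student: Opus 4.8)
The plan is to play the two structures on the covering against each other: the canonical absolute parallelism on $F$, which makes $F$ rigid, and the transitive action of $G$ on $M$, which lifts to $F$. First I would record the dimension count coming from Theorem \ref{autoM}. Since $aut^0_p(M,J)=\{0\}$ the isotropy $G_p$ is $0$-dimensional, and since $Aut^0(M,J)$ has at most two elements we get $|G_p|\le 2$; transitivity of $G$ on the $4$-dimensional $M$ then forces $\dim G=\dim M=4$. Every $g\in G$, being an automorphism of $(M,J)$, preserves the Nijenhuis tensor, the bundles $\mathcal V^{\pm}$, and hence the pair of adapted frames $f'_p,f''_p$ of Theorem \ref{adaptedframes}; it therefore lifts canonically to a diffeomorphism $\tilde g\colon F\to F$ covering $g$ and preserving the $\{e\}$-structure. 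Because a parallelism-preserving map is determined by its value at one point, this lifted action is free; as $\dim G=\dim F=4$ the orbit maps $G\to F$ are injective immersions, hence local diffeomorphisms, so every $G$-orbit in $F$ is open.

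Next I would separate the two cases according to how the lifts interact with the connected components of $F$. Since $G$ is connected and $\tilde e=\mathrm{id}$ preserves the components, continuity forces every $\tilde g$ to preserve the components of $F$.

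In case (i), where $F$ is connected, $F$ is a disjoint union of open $G$-orbits and is connected, so there is a single orbit: $G$ acts transitively, and together with freeness this exhibits $F$ as a $G$-torsor, whence $F\cong G$ after fixing a base point. To extract the subgroup of order $2$ and the quotient description I would use the nontrivial deck transformation $\sigma$ of the double cover $\pi\colon F\to M$. Counting the two lifts of each $g$ gives $\tilde g\,\sigma=\sigma\,\tilde g$, so $\sigma$ centralizes the (left) $G$-action on $F\cong G$ and must be a right translation $\sigma=R_a$; from $\sigma^2=\mathrm{id}$ and $\sigma\neq\mathrm{id}$ we get $a^2=e$ and $a\neq e$, so $\{e,a\}$ is a subgroup of order $2$. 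Identifying $G_p$ with the fiber $\pi^{-1}(p)=\{f'_p,f''_p\}=\{e,a\}$ yields $M\cong G/\langle a\rangle$, and this coset space inherits a Lie group structure from $G$ exactly when $\langle a\rangle$ is normal; since $\langle a\rangle$ has order $2$, normality is equivalent to $a$ being central, i.e. to $G_p$ lying in the center of $G$. In case (ii), where $F$ is non-connected, the degree-$2$ cover of the connected $M$ splits as $F=F_1\sqcup F_2$ with each $F_i\to M$ one-sheeted, hence a diffeomorphism $F_i\cong M$. Because every $\tilde g$ preserves components, any $g\in G_p$ must fix the point of $\pi^{-1}(p)$ lying in $F_1$, so its lift fixes a frame and therefore $g=\mathrm{id}$; thus $G_p=\{e\}$ and $M\cong G$, whence $F_i\cong M\cong G$ and $F\cong G\times\{0,1\}$.

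The hard part will be the rigidity input: justifying that an automorphism of the $\{e\}$-structure on $F$ fixing a point is the identity, so that $G$ acts freely with open orbits, and the bookkeeping that identifies the deck transformation with a right translation and the isotropy $G_p$ with the fiber $\{e,a\}$. Once these are in place, the transitivity in (i) and the collapse $G_p=\{e\}$ in (ii) are formal, and the Lie-group criterion reduces to the normality of an order-$2$ subgroup, which is precisely centrality of its generator.
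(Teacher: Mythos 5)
Your proof is correct and takes essentially the same route as the paper's: lift the $G$-action to $F$, use rigidity of the absolute parallelism to get a free action with open orbits (the paper compresses this into the assertion that ``the action of $G$ on $F$ is an immersion''), conclude $F\cong G$ when $F$ is connected, identify $G_p$ with a subgroup of order $2$ so that the Lie-group criterion reduces to centrality of its generator, and in the disconnected case split $F$ into two sheets each diffeomorphic to $M\cong G$. Your write-up is in fact more detailed than the paper's own proof --- notably the freeness/open-orbit argument, the component-preservation step, and the identification of the deck transformation with a right translation $R_a$ are spelled out where the paper only asserts the conclusions.
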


\begin{proof}

(i) Since the action of $G$ on $F$ is an immersion, from the connection of $G$  
we have that $F \cong G$ when $F$ is connected (because the only connected subgroups of $F$ are the trivial ones). 
Hence, from $M \cong G/ G_p$ and  $M \cong F/ \mathbb Z_2$, we obtain $M \cong G/ \mathbb Z_2$. 
This means that in this case there must be subgroups of $G$ of order 2, so we can write $G_p=\langle id_G, h\rangle$, 
with $h^2=id_G$ and $h\neq id_G$.
In general, $M \cong G/ G_p$ is not a Lie group, but just a manifold. 
We have that $M \cong G/ G_p$ is a Lie group if and only if either $g^{-1}hg=id_G$ or $g^{-1}hg=h$ holds, 
for all $g \in G$. Since $h \neq id_G$, the latter holds, i.e. $G_p \subseteq Z(G)$, where $Z(G)$ is the center of $G$.
So, $G/ G_p$ is a Lie group if and only if $G_p \subseteq Z(G)$. 

(ii) When $F$ is not connected, every sheet of $F$ is isomorphic to $G$ and, since $F \cong G \times \{0,1\}$, we have that $M \cong G$.
In this case the isotropy subgroup of $G$ must be the trivial one. 
\end{proof}


Now, according to Proposition \ref{connessioneF}, we consider the following two cases. 

\subsection{Manifolds $(M,J)$ with non-connected double covering}

We take $M$ as any homogeneous almost complex manifold which is isomorphic to the Lie group
$$ \tilde{G}= \left\{
\left( 
\begin{array}{cccc}
1	& a	& \frac{a^2}{2}		& d \\
0	& 1	& a	 								& c \\
0	& 0	& 1	 								& b \\
0	& 0	& 0	 								& 1 \\
\end{array}\right)
: \quad a,b,c,d \in \mathbb{R} \right \},
$$
having $\mathbf {A_{4.1}}$ as associated Lie algebra.
The group $\tilde{G}$ is connected and simply connected since it is isomorphic (as topologic manifold) to $\mathbb R^4$; 
in particular, it is equivalent to the manifold of Kim and Lee example (see Example \ref{KimLee}).
The double covering $F$ of $M$ is not connected.
Moreover, $\tilde{G}$ is a nilpotent group  whose center is
$$Z( \tilde{G})= \left\{
\left( 
\begin{array}{cccc}
1	& 0	& 0		& \alpha \\
0	& 1	& 0	 	& 0 \\
0	& 0	& 1	 	& 0 \\
0	& 0	& 0	 	& 1 \\
\end{array}\right)
: \quad \alpha \in \mathbb{R} \right \}.
$$ 

\subsection{Manifolds $(M,J)$ with connected double covering}

Let us consider the group $\tilde{G}$ built above. A normal discrete subgroups of $\tilde{G}$ is  given by
$$ N_t= \left\{
\left( 
\begin{array}{cccc}
1	& 0	& 0		& tk \\
0	& 1	& 0	 	& 0 \\
0	& 0	& 1	 	& 0 \\
0	& 0	& 0	 	& 1 \\
\end{array}\right)
: \quad k \in \mathbb{Z} \right \},
$$
for a fixed $t \in \mathbb R$.
The group $\tilde{G}/N_2$ is connected and is a double covering of $\tilde{G}/N_1$. 
Moreover, 
if $[N_1]$ and $[N_2]$ are the classes of equivalence of the elements $N_1$ and $N_2$ of $\tilde{G}/N_2$, 
we have that $\langle[N_2],[N_1]\rangle$ is a normal subgroup of order two of $\tilde{G}/N_2$, as expected.

\smallskip
Since the fact that $\tilde{G}$ is connected and simply connected implies that 
all the other Lie groups having $\mathbf {A_{4.1}}$ as Lie algebra 
are of the form $\bar{G}\cong \tilde{G}/N$, 
where $N$ is a normal discrete subgroup contained into the center of $\tilde{G}$, 
we have the following result.

\begin{prop}\label{MconA4.1}
Any homogeneous almost complex manifold $(M,J)$, with non-degenerate torsion bundle 
and with the connected component of the group of automorphisms $Aut(M,J)$ isomorphic to $\mathbf {A_{4.1}}$, 
is equivalent to one of these Lie groups: 
\begin{enumerate}
 \item [(i)]
$ \tilde{G}= \left\{
\left( 
\begin{array}{cccc}
1	& a	& \frac{a^2}{2}		& d \\
0	& 1	& a	 								& c \\
0	& 0	& 1	 								& b \\
0	& 0	& 0	 								& 1 \\
\end{array}\right)
: \quad a,b,c,d \in \mathbb{R} \right \},
$\\
which has a non-connected double covering $F \cong \tilde{G} \times \{0,1\}$;  
\item[(ii)]
 $\tilde{G}/N_t,$ where
$ N_t= \left\{
\left( 
\begin{array}{cccc}
1	& 0	& 0		& tk \\
0	& 1	& 0	 	& 0 \\
0	& 0	& 1	 	& 0 \\
0	& 0	& 0	 	& 1 \\
\end{array}\right)
: \quad k \in \mathbb{Z} \right \},
$\\
which has a connected double covering $F \cong \tilde{G}/N_{2t}$. 
\end{enumerate}
\end{prop}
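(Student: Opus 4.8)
\emph{The plan} is to pin down the connected group $G:=Aut^0(M,J)$ purely from its Lie algebra, and then read off $M$ together with its double covering $F$ from Proposition \ref{connessioneF}. First I would observe that, since $(M,J)$ is homogeneous of dimension $4$ with discrete isotropy (Theorem \ref{autoM}), $G$ is a connected $4$-dimensional Lie group with Lie algebra $\mathbf {A_{4.1}}$. As $\tilde{G}$ is simply connected with this same Lie algebra, covering theory yields $G\cong\tilde{G}/N$ for a discrete subgroup $N$ contained in the kernel of $\tilde{G}\to G$, hence in $Z(\tilde{G})$. A direct matrix computation shows $Z(\tilde{G})\cong\mathbb R$ (the one-parameter subgroup displayed above), so its only discrete subgroups are $N=\{e\}$ and $N=N_t$ for some $t>0$. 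This confines the possibilities for $G$ to $\tilde{G}$ and $\tilde{G}/N_t$.

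Next I would invoke Proposition \ref{connessioneF} to pass from $G$ to the pair $(M,F)$. Recall that $F$ is non-connected exactly when the isotropy $G_p$ is trivial, in which case $M\cong G$ and $F\cong G\times\{0,1\}$; whereas $F$ is connected exactly when $|G_p|=2$, in which case $F\cong G$ and $M\cong G/G_p$. Thus the entire statement reduces to deciding, for each admissible $G$, whether $G$ contains an order-$2$ subgroup available to serve as $G_p$.

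For $G\cong\tilde{G}$ the answer is immediate: a simply connected nilpotent group is torsion-free, so $\tilde{G}$ has no subgroup of order $2$, forcing $G_p=\{e\}$; hence $F$ is non-connected, $M\cong\tilde{G}$, and $F\cong\tilde{G}\times\{0,1\}$, which is case (i). For case (ii) I would instead start from $G\cong\tilde{G}/N_{2t}$. Any order-$2$ element $[g]$ satisfies $g^2\in N_{2t}$ with $g\notin N_{2t}$, and (using again that $\tilde{G}$ is torsion-free nilpotent, so $g^2$ central forces $g$ central) such $g$ must lie in $Z(\tilde{G})$; since $Z(\tilde{G})/N_{2t}\cong\mathbb R/2t\mathbb Z\cong S^1$ has a unique element of order $2$, the only candidate is $G_p:=\langle[g_t]\rangle=N_t/N_{2t}\cong\mathbb Z_2$, where $g_t$ generates $N_t$. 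With this choice $F\cong G\cong\tilde{G}/N_{2t}$ is connected and, via the elementary isomorphism $(\tilde{G}/N_{2t})/(N_t/N_{2t})\cong\tilde{G}/N_t$, we get $M\cong G/G_p\cong\tilde{G}/N_t$, which is case (ii).

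The step I expect to be \emph{the main obstacle} is showing that these two cases are genuinely exhaustive, i.e.\ ruling out the a priori possibility $G\cong\tilde{G}/N_t$ with trivial isotropy, which would yield a further manifold $M\cong\tilde{G}/N_t$ carrying a \emph{non-connected} $F$. Settling this means identifying the isotropy $G_p$ intrinsically with the monodromy of the distinguished line field $\mathcal V^+$ around the central circle of $M$: one must check whether transporting the distinguished section $X$ once around the generator of $\pi_1(M)\cong N_t$ returns $X$ or $-X$, equivalently whether the order-$2$ automorphism sending $X\mapsto -X,\ JX\mapsto -JX$ and fixing $T,JT$ (the one interchanging the two adapted frames of Theorem \ref{adaptedframes}) lies in the connected component $G$. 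The two subsidiary facts that feed into this argument --- that order-$2$ elements of $\tilde{G}/N_s$ are necessarily central, and the quotient isomorphism $(\tilde{G}/N_{2t})/(N_t/N_{2t})\cong\tilde{G}/N_t$ --- are the ingredients already used above, while the explicit description of $Z(\tilde{G})$ reduces to routine bracket computations in $\mathbf {A_{4.1}}$.
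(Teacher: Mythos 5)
Your route is essentially the paper's own: the paper, too, classifies the candidate groups by observing that $\tilde{G}$ is connected and simply connected, so that every connected Lie group with Lie algebra $\mathbf {A_{4.1}}$ is $\tilde{G}/N$ with $N$ discrete and central, and then appeals to Proposition \ref{connessioneF}; beyond this it offers no argument assigning to each quotient its covering type. So the step you single out as the main obstacle --- ruling out $G \cong \tilde{G}/N_t$ acting with trivial isotropy, which would make $M \cong \tilde{G}/N_t$ carry a \emph{non-connected} $F$ --- is precisely what the paper leaves unaddressed, and it is a genuine gap. Worse, the monodromy check you propose in order to close it comes out the opposite way. Since $\tilde{G}$ is simply connected, a distinguished section $X$ of $\mathcal V^+$ is globally defined on it; for $g$ in the connected component $G$ of $Aut(\tilde G,J)$ one has $g_*X=\epsilon(g)X$ with $\epsilon(g)\in\{\pm1\}$ depending continuously on $g$, so connectedness of $G$ forces $\epsilon\equiv 1$: the adapted frame field is invariant under the transitive group, i.e.\ left-invariant. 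The deck transformations of $\tilde{G}\rightarrow\tilde{G}/N_t$ are translations by elements of $N_t\subseteq Z(\tilde{G})$, hence lie in $G$ and preserve the frame field, which therefore descends to $\tilde{G}/N_t$. Thus the adapted-frame bundle of $\tilde{G}/N_t$ admits a global section and is non-connected, contrary to what case (ii) asserts.

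The same conclusion follows from a lemma you prove but do not confront: every order-$2$ element of $\tilde{G}/N_s$ is central. If $F$ were connected, Proposition \ref{connessioneF} would give isotropy $G_p=\langle h\rangle$ of order $2$ with $h$ central in $G$ (this is also what your construction of case (ii) produces, $G_p=N_t/N_{2t}$). But a central isotropy element fixes every point: for $q=g(p)$, $h(q)=hg(p)=gh(p)=g(p)=q$ by transitivity, so $h$ is the identity map of $M$, contradicting $h\neq id$ in $G$, since $G$ consists of diffeomorphisms of $M$ and hence acts effectively. So for this Lie algebra the isotropy is always trivial, Proposition \ref{connessioneF} always puts you in the non-connected case, and $M\cong G\in\{\tilde{G},\tilde{G}/N_t\}$ with $F\cong M\times\{0,1\}$ in both cases. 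In short, your proposal (like the paper's own discussion) does yield the list of manifolds $\tilde{G}$, $\tilde{G}/N_t$, but the covering assignments cannot be established along these lines: the ``connected $F\cong\tilde{G}/N_{2t}$'' claim of case (ii) is incompatible with the centrality of involutions that you yourself prove, and a connected-$F$ example would require a group with a \emph{non-central} element of order two, which no connected group with Lie algebra $\mathbf {A_{4.1}}$ possesses.
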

 
\begin{oss}
When the double covering $(F,\pi)$ of any homogeneous $(M^4,J)$ has the non-connected $F$, 
there are two independent absolute parallelisms globally on $(M^4,J)$.
Whereas, when the double covering $(F,\pi)$ of any homogeneous $(M^4,J)$ has the connected $F$, 
we do not have a global absolute parallelism on $(M^4,J)$ given by adapted frames.
\end{oss}

\section{Metrics on almost complex manifolds 
with torsion
}\label{section4}
In the previous sections 
we developed the theory about the existence of two adapted frames on 
almost complex manifolds $(M^4,J)$ of real dimension 4 with non-degenerate torsion bundle.
In this section we  endow these almost complex manifolds with metrics induced by such adapted frames.

First of all we define a \emph{natural metric} on $(M^4,J)$ taking 
$$(X_p, JX_p, T_p^X, JT^X_p)$$
as orthonormal base of $T_pM$, where $p \in M$ and 
$X$ is the distinguished section of $\mathcal V^+$ in a neighborhood of $p$ (see Proposition \ref{distinguished-field}).
Since a $J$-biholomorphic transformation 
sends adapted frames into adapted frames, we have:
\begin{prop}
To fix a frame $(X_p, JX_p, T_p^X, JT^X_p)$, with $X$ the distinguished section of $\mathcal V^+$ and $p \in M$, 
is equivalent to have an invariant canonical norm on $T_pM$.  
In particular, the $J$-holomorphic transformations are isometries of $M$. 
\end{prop}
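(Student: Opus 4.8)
The plan is to treat the two assertions separately: first the equivalence between fixing an adapted frame and prescribing a canonical norm, then the isometry statement.

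For the equivalence, I would begin by observing that the natural metric is well defined independently of which of the two adapted frames of Theorem \ref{adaptedframes} is used. The frames $f'_p$ and $f''_p$ differ only by the substitution $X_p \mapsto -X_p$, $JX_p \mapsto -JX_p$, so declaring either $(X_p, JX_p, T_p^X, JT^X_p)$ or $(-X_p, -JX_p, T_p^X, JT^X_p)$ to be orthonormal produces the \emph{same} inner product on $T_pM$, since orthonormality is insensitive to the signs of basis vectors. Hence the $\mathbb Z_2$-structure $F$ determines a single inner product at each point, invariant under the action of its structure group. Conversely, prescribing an inner product on $T_pM$ is the same as prescribing its family of orthonormal bases, among which the adapted frames appear; this yields the asserted equivalence between fixing the frame and having the norm. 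The norm is \emph{canonical} precisely because the adapted frames are determined by $J$ alone, through the distinguished section of $\mathcal V^+$ (Proposition \ref{distinguished-field}).

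For the isometry statement, let $\varphi$ be a $J$-holomorphic transformation of $M$, so that $d\varphi \circ J = J \circ d\varphi$. Such a map preserves the Nijenhuis tensor, $\varphi_* N_J = N_J$, and therefore carries the torsion bundle $\mathcal V$, its eigenspaces $\mathcal V^\pm$, and --- through the eigenvalue-$1$ characterization of $\tau$ in Proposition \ref{distinguished-field} --- the distinguished section $X$ onto the corresponding objects over $\varphi(p)$. Combined with $\varphi_*[X,JX] = [\varphi_* X, \varphi_* JX]$ and $\varphi_* JX = J\,\varphi_* X$, this shows that $d\varphi_p$ sends the adapted frame at $p$ to an adapted frame at $\varphi(p)$. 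Since every adapted frame is orthonormal for the natural metric $g$, the map $d\varphi_p$ carries an orthonormal basis to an orthonormal basis and is thus a linear isometry; as $p$ is arbitrary, $\varphi$ is an isometry of $(M,g)$.

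The routine verifications --- the sign independence and the standard correspondence between inner products and orthonormal frames --- are immediate. The one substantive step is that a $J$-holomorphic map sends adapted frames to adapted frames, which is exactly the naturality remark preceding the statement; it rests on the fact that the entire construction passing from $\mathcal V$ to $\mathcal V^\pm$, then to the distinguished section $X$, and finally to the frame $(X_p, JX_p, T_p^X, JT^X_p)$ is built functorially out of $J$ and the Lie bracket, both of which are preserved by biholomorphisms.
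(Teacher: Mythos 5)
Your proposal is correct and takes essentially the same approach as the paper, whose entire ``proof'' is the prefatory remark that a $J$-biholomorphic transformation sends adapted frames into adapted frames, so that declaring the adapted frame orthonormal yields a well-defined invariant metric. Your write-up simply expands that remark: sign-insensitivity of orthonormality disposes of the $\mathbb Z_2$-ambiguity in Theorem \ref{adaptedframes}, and the naturality of $N_J$, of $\mathcal V^{\pm}$, and of the distinguished section of Proposition \ref{distinguished-field} under biholomorphisms gives the frame preservation.
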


More in general we could consider a metric on $(M^4,J)$ that it becomes an almost K\"ahler manifold.

Let us consider $(M^4,J)$ endowed with a 
Riemannian metric $\mathcal G$ such that $\mathcal G(X,Y)=\mathcal G(JX,JY)$, for any $X,Y \in \Gamma(TM)$, 
and with a fundamental 2-form $\Omega$, defined by
$\Omega(X,Y)= \mathcal G(X,JY),$ in a way that $(M^4,J,\mathcal G)$ becomes an almost K\"ahler manifold, that is $d\Omega=0$.

We require that $(X_p,JX_p,T_p,JT_p)$ is an orthogonal base of the tangent space $T_pM$ of $(M^4,J,\mathcal G)$ and
$$ \mathcal G (X,X)=a, \quad \mathcal G (JX,JX)=b, \quad \mathcal G (T,T)=c, \quad \mathcal G (JT,JT)=d,$$
for some real differentiable functions $a,b,c,d$.
\\
The $J$-invariance of $\mathcal G$ implies $a=b$ and $c=d$. 
By the formula 
$$
\begin{array}{ll}
d\Omega(X,Y,Z) = & \dfrac{1}{3} \left\{ X \Omega(Y,Z)+Y \Omega(Z,X)+Z \Omega(X,Y) \right. \\ 
                 & \left. - \Omega([X,Y],Z) - \Omega([Z,X],Y) - \Omega([Y,Z],X)\right\}
\end{array}
$$
of $d\Omega$, we obtain that the condition $d\Omega=0$ is equivalent to the system
\begin{equation}\label{metric_condition}
\left\{
\begin{array}{l}
T(a)-\mathcal G ([T,X],X) + \mathcal G([JX,T],JX)=0 \\
JT(a)-c-\mathcal G ([JT,X],X) + \mathcal G([JX,JT],JX)=0 \\
X(c)-\mathcal G ([X,T],T) + \mathcal G([JT,X],JT)+ \mathcal G([T,JT],JX)=0 \\
JX(c)-\mathcal G ([JX,T],T) + \mathcal G([JT,JX],JT)- \mathcal G([T,JT],X)=0. \\
\end{array}
\right.
\end{equation}

\begin{oss}
Any almost complex manifold of dimension 4 has the local symplectic property (see \cite{Le}), i.e., 
given an almost complex structure $J$ on $M^4$ there exists a symplectic form which is compatible with $J$ 
in a neighborhood of each point of $(M^4,J)$, that is $\Omega(JX,JY)=\Omega(X,Y)$, for any $X,Y \in \Gamma(TM)$.
\end{oss}

\begin{oss}
A 4-dimensional almost complex manifold $(M^4,J)$ with non-degenerate torsion is of type 0 
(see Theorem 2.1 in \cite{Mu}), while a 4-dimensional nearly K\"ahler manifold should be of type 1
(see Theorem 4.2 in \cite{Mu}). Then $(M^4,J)$ cannot be nearly K\"ahler.
\end{oss}

\begin{oss}
When $(M^4,J)$ is homogeneous, it is not possible to give on it an invariant structure of almost K\"ahler manifold. 
Indeed, since any $J$-biholomorphic transformation of $(M^4,J)$ sends an adapted frame into an adaped frame, it is an isometry, hence it 
preserves the almost complex structure $J$ and the metric $\mathcal G$, so it also preserve the curvature. 
In \cite{Bl} Blair shows that in dimension 4 there are no almost K\"ahler manifolds of constant
curvature unless the constant is 0, in which case the manifold is K\"ahlerian.
Therefore, an homogeneous $(M^4,J)$ must be a (non-almost) K\"ahler manifold. 
\end{oss}


Finally we analyze the example of Kim and Lee in order to find some solutions of \eqref{metric_condition}. 
For the adapted frame $(X_p,JX_p,T_p,JT_p)$ of $(M^4,J)$ we have
$\mathcal G (X,X)=\mathcal G (JX,JX)=a$ and $\mathcal G (T,T)=\mathcal G (JT,JT)=c.$
If we take $a$ and $c$ as constants, the functions $T(a),JT(a),X(c),JX(c)$ in \eqref{metric_condition} are zero, 
moreover since $[X,JX]=T$ and $[X,JT]=JX,$ (see \eqref{KMrule})
the system \eqref{metric_condition} becomes
$$
\left\{
\begin{array}{l}
\mathcal G (0,X) + \mathcal G(0,JX)=0 \\
-c-\mathcal G (-JX,X) + \mathcal G(0,JX)=0 \\
-\mathcal G (0,T) + \mathcal G(-JX,JT)+ \mathcal G(0,JX)=0 \\
-\mathcal G (0,T) + \mathcal G(0,JT)- \mathcal G(0,X)=0. \\
\end{array}
\right.
$$
This gives $c=0$ for any $a$. 
Therefore, when $a$ and $c$ are constants we cannot define a metric which is compatible with $J$ on $(M^4,J)$. 

There are different results when $a$ and $c$ are differentiable functions. The system \eqref{metric_condition} becomes
$$
\left\{
\begin{array}{l}
T(a)=0,\\
JT(a)-c=0, \\
X(c)=0, \\
JX(c)=0, \\
\end{array}
\right.
$$
$$
\left\{
\begin{array}{l}
\dfrac{\partial}{\partial y_2}(a)=0, \\[10pt]
\dfrac{1}{2}\left( 2y_1\dfrac{\partial}{\partial x_1}-(4x_1y_1+1)\dfrac{\partial}{\partial x_2}\right)(a)+c=0, \\[10pt]
\left( \dfrac{1}{2}\dfrac{\partial}{\partial x_1}-x_1\dfrac{\partial}{\partial x_2}+y_1\dfrac{\partial}{\partial y_2}\right)(c)=0, \\[10pt]
\left( -\dfrac{1}{2}\dfrac{\partial}{\partial y_1}+x_1\dfrac{\partial}{\partial y_2}+y_1\dfrac{\partial}{\partial x_2}\right)(c)=0. \\[10pt]
\end{array}
\right.
$$
A solution of this system is 
\begin{equation} \label{solutionKLmetric}
\left\{
\begin{array}{l}
a=e^{x_1^2+x_2+y_1^2}, \\
c=\frac{1}{2}e^{x_1^2+x_2+y_1^2}. \\
\end{array}
\right.
\end{equation}
Therefore, when $a$ and $c$ are differentiable functions such that the system \eqref{solutionKLmetric} holds, 
$(M^4,J,\mathcal G)$ is an almost K\"ahler manifold.


\end{document}